\theoremstyle{plain}
\newtheorem{theorem}{Theorem}[section]
\newtheorem{lemma}[theorem]{Lemma}
\newtheorem{corollary}[theorem]{Corollary}
\newtheorem{proposition}[theorem]{Proposition}
\theoremstyle{definition}
\newtheorem{definition}[theorem]{Definition}
\newtheorem{problem}{Problem}[section]
\newtheorem*{remark}{Remark}
\newtheorem*{remarks}{Remarks}
\numberwithin{equation}{section}
\newcommand{\im}[1]{\mathbf{ i_{#1}}}
\newcommand{\ims}[1]{\mathbf{ i_{#1}^2}}
\newcommand{\jm}[1]{\mathbf{ j_{#1}}}
\newcommand{\jms}[1]{\mathbf{ j_{#1}^2}}
\newcommand{\km}[1]{\mathbf{k_{#1}}}
\newcommand{\kms}[1]{\mathbf{k_{#1}^2}}
\newcommand{\be}[1]{\mathbf{e_{#1}}}
\newcommand{\bec}[1]{\mathbf{\overline{e}_{#1}}}
\newcommand{\bep}[1]{\bm{\varepsilon_{#1}}}
\newcommand{\mB}{\mathbb{B}}
\newcommand{\mC}{\mathbb{C}}
\newcommand{\mR}{\mathbb{R}}
\newcommand{\mM}{\mathbb{M}}
\newcommand{\mI}{\mathbb{I}}
\newcommand{\cL}{\mathcal{L}}
\newcommand{\cB}{\mathcal{B}}
\newcommand{\ra}{\rightarrow}
\newcommand{\Ra}{\Rightarrow}
\DeclareMathOperator{\spn}{span}
\DeclareMathOperator{\hol}{Hol}
\begin{document}

\title[Cauchy--Riemann Equations and Involutions]{Classes of Holomorphic Multicomplex-Valued Functions Generated by Elliptic-Admissible Involutions}

\author{Nicolas Doyon}
\address{Nicolas Doyon: Département de mathématiques et de statistique, Université Laval, Quebec City, QC, G1V 0A6, Canada}
\email{Nicolas.Doyon@mat.ulaval.ca}

\author{Pierre-Olivier Paris\'e}
\address{Pierre-Olivier Parisé: Département de mathématiques et d'informatique, Université du Québec à Trois-Rivières,
Trois-Rivières, QC, G8Z 4M3, Canada}
\email{pierre-olivier.parise@uqtr.ca}
\thanks{POP is supported by an NSERC discovery grant with {\#}RGPIN-2026-04740 and {\#}DGECR-2026-00390.}

\author{William Verreault}
\address{
William Verreault: Department of Mathematics, University of Toronto, Toronto, ON, M5S 2E4, Canada}
\email{william.verreault@utoronto.ca}

\begin{abstract}
We classify and count the real-algebra involutions of the multicomplex
algebra $\mM\mC(n)$ that map each element of its canonical monomial
basis to a signed monomial, using a matrix model over $\mathbb F_2$.

An \emph{elliptic-admissible pair} consists of such an involution
$\sigma$ and a monomial unit $\im{}\in\mI(n)$ satisfying
$\ims{}=-1$ and $\sigma(\im{})=-\im{}$. The fixed algebra of $\sigma$
is a real form for the complex structure $\cL_{\im{}}$ defined by
multiplication by $\im{}$, and the pair yields a Cauchy--Riemann system.
We prove that its solution class depends only on $\im{}$, not on
$\sigma$, and is exactly the class of mappings holomorphic with respect
to $\cL_{\im{}}$. Multicomplex holomorphy is recovered as the
intersection of the classes associated with the elementary generators.
We obtain the analogous characterization for anti-holomorphic classes,
describe twisted systems intertwining two such complex structures, and
prove that every $\im{}$-holomorphic or $\im{}$-anti-holomorphic
mapping is componentwise harmonic (solutions to Laplace's equation).
\end{abstract}

\subjclass[2020]{Primary: 30G35; Secondary: 15B33, 16W20, 05A05.}

\keywords{Cauchy--Riemann equations, holomorphic functions, several complex variables, hypercomplex analysis, real algebras, multicomplex numbers, Gaussian binomial coefficient, involutions, signed permutations.}

\maketitle


\section{Introduction}\label{S:Introduction}

In the classical theory of complex-valued functions, a function $f : \Omega \subseteq \mC \ra \mC$ that is holomorphic can be characterized through a system of equations called the Cauchy--Riemann equations. These equations can be described using the Wirtinger \cite{Remmert1991}  operators 
    \[
        \partial_z := \frac{1}{2} \Big( \frac{\partial}{\partial x} - i \frac{\partial}{\partial y} \Big) \quad \text{and} \quad \partial_{\overline{z}} := \frac{1}{2} \Big( \frac{\partial}{\partial x} + i \frac{\partial}{\partial y} \Big),
    \]
where $z := x + iy$, with $x, y \in \mR$ and $i^2 = -1$, and $\overline{z} := x - iy$ denotes the complex conjugate of $z$. A continuously differentiable function $f : \Omega \subseteq \mC \ra \mC$ is then holomorphic if and only if $\partial_{\overline{z}} f = 0$.

Several approaches generalize holomorphic-function theory through systems of partial
differential equations. One such approach uses Clifford and complexified Clifford
algebras. In this setting, the analogue of $\partial_{\overline{z}}$ is a Dirac
operator: a linear combination of the partial derivatives associated with the
anticommuting generators of the Clifford algebra, whose null solutions are called
monogenic functions. The idea of a Dirac-type system for space goes back to Moisil and
Teodorescu \cite{MoisilTeodorescu1931}, and the function theory of a single quaternionic
variable to Fueter \cite{Fueter1935, Fueter1936}, whose theory of regular functions was
placed on its modern footing by Sudbery \cite{Sudbery1979}. It was then systematically
developed in the setting of Clifford algebras by Delanghe \cite{Delanghe1970}, by
Brackx, Delanghe and Sommen \cite{BrackxDelangheSommen1982} and by Delanghe, Sommen and
Sou\v{c}ek \cite{DelangheSommenSoucek1992} (see also Gilbert and Murray
\cite{GilbertMurray1991}), and extended to the complexified Clifford algebras by Ryan
\cite{Ryan1982}. A more recent and rapidly developing branch replaces the Dirac system
by a slice condition, giving the slice-regular functions of a quaternionic variable
introduced by Gentili and Struppa \cite{GentiliStruppa2007} and the slice-monogenic
functions over Clifford algebras of Colombo, Sabadini and Struppa
\cite{ColomboSabadiniStruppa2009, ColomboSabadiniStruppa2011}.

Unlike the Clifford approach, in which a single first-order operator governs a non-commutative, multiplicatively non-closed class, a second approach uses commutative generators to define a higher-dimensional commutative real algebra and to create a class of functions as solutions to a system of Cauchy--Riemann equations.

This second direction was pursued for the bicomplex numbers, that is, the set $\mM \mC(2)$ (also denoted by $\mB \mC$) of expressions of the form $\eta = x + y \im{1} + z \im{2} + w \im{1}\im{2}$, where $x, y, z, w \in \mR$, and where the imaginary units $\im{1}$ and $\im{2}$ commute and satisfy $\ims{1} = \ims{2} = -1$. 
In \cite{Bicomplex}, the authors introduced three conjugations of the set of bicomplex numbers (denoted by $\dagger$, $\ast$ and $\overline{\phantom{2}}$) and defined the four associated Wirtinger-type operators\footnote{The precise expression of these operators can be found originally in \cite{RochonShapiro}.} $\partial_{\eta}$, $\partial_{\eta^\dagger}$, $\partial_{\eta^\ast}$, and $\partial_{\overline{\eta}}$, so that a bicomplex-valued function $f : \Omega \subseteq \mM \mC(2) \ra \mM \mC(2)$ that is continuously differentiable as a map of real vector spaces is holomorphic if and only if the following Cauchy--Riemann type equations $\partial_{\eta^\dagger} f = \partial_{\eta^\ast} f = \partial_{\overline{\eta}} f = 0$ are satisfied. These Wirtinger operators were then generalized to the multicomplex numbers $\mM \mC(n)$ \cite{Baley}, a generalization of the complex numbers and the bicomplex numbers to higher dimensions, using a distinguished family of conjugations \cite{Struppa2012}.

The unifying algebraic notion behind all the conjugations mentioned above is that of an involution. Terminology varies in the literature. Throughout this paper, an involution refers to a real-algebra automorphism whose square is the identity. This convention is consistent with the usage in \cite{SangwineTodd2007,Lawson2021,Parise}. Our first result in Section~\ref{S:InvoPreserveElemUnits} shows that there are more involutions than the $2^n$ mainly referred to in the literature on multicomplex function theory; see \cite{ CourchesneTremblay2025, Pelletier2009, Bicomplex, RochonShapiro, RochonTremblay2004, Struppa2012}. This suggests that there are more Cauchy--Riemann type equations that can be created from these additional involutions. Based on these new involutions, we introduce a new framework—a complex structure on $\mM \mC (n)$ associated with an involution and a distinguished unit $\im{} \in \mM \mC (n)$ with $\ims{} = -1$—to study these systems of Cauchy--Riemann equations and to establish a connection to the space of holomorphic functions of a multicomplex variable. This approach is presented in Section \ref{S:classesMonoFunctions}, and to the best of the authors' knowledge, the approach presented here has not been developed in the literature. We hope it will shed new light on function theory in multicomplex space. 

For the rest of the introduction, we let $\im{1}, \ldots, \im{n}$ be the elementary commuting imaginary units of $\mM \mC(n)$ and define the set $\mI(n)$ as the set of numbers that can be written as $\im{1}^{a_1}\cdots \im{n}^{a_n}$ with $a_k\in \{0,1\}$. We also denote by $\pm \mI(n)$ the set of all the numbers of the form $\pm \im{}$, for $\im{} \in \mI(n)$. Section~\ref{S:Multicomplex} gives some preliminaries on the multicomplex numbers. 

\subsection{Summary of the main results}
In Section~\ref{S:Multicomplex}, we introduce the concept of holomorphic functions for multicomplex-valued functions of a multicomplex variable in terms of the multicomplex linearity of the real differential. In Proposition \ref{P:DerivativeEquivalence}, we show that this definition of holomorphy is equivalent to the one given in the literature in terms of the limit of a difference quotient. This fact is well-known in the literature for bicomplex-valued functions of a bicomplex variable (see \cite[Remark 7.3.4]{Bicomplex}). We provide a complete proof in the multicomplex setting, without assuming any sort of regularity. 

In Section~\ref{S:InvoPreserveElemUnits}, we introduce the notion of an $\mI(n)$-preserving involution of $\mM \mC(n)$, $n \geq 1$, that is, an involution $f$ of $\mM \mC(n)$ that maps every element of $\mI(n)$ into $\pm\mI(n)$. We show that the number of $\mI(n)$-preserving involutions of the multicomplex numbers is given by the formula
    \begin{align}
        \sum_{k=\lceil n/2\rceil}^{n}  \Big( \prod_{j = 1}^{k-1} \frac{2^n - 2^j}{2^k - 2^j} \Big) \Big( \prod_{j = 0}^{n-k-1} (2^k - 2^j) \Big) 2^k , \label{Eq:NumberOfInPreservingInvo}
    \end{align}
where an empty product is understood to be equal to $1$. This gives an integer sequence starting with $2$, $6$, $44$, $576$, $15392$, $\ldots$, such that, surprisingly, we could not find a reference for it in the On-Line Encyclopedia of Integer Sequences (OEIS). 
We prove formula \eqref{Eq:NumberOfInPreservingInvo} by translating the counting problem into a counting problem in the world of matrix theory over the field of two elements $\mathbb{F}_2$ and using linear algebra tools there. 
The proof reveals that the inner factor $\prod_{j=1}^{k-1}(2^n-2^j)/(2^k-2^j)$ is the Gaussian binomial coefficient $\genfrac{[}{]}{0pt}{}{n-1}{k-1}_2$, so that \eqref{Eq:NumberOfInPreservingInvo} is a weighted sum over the subspace lattice of $\mathbb{F}_2^{n}$. 
Such sums are ubiquitous in the applied theory of linear algebra over $\mathbb{F}_2$. 
On the one hand, the subspace lattice of $\mathbb{F}_2^n$ is the ambient space of the subspace codes used in random network coding \cite{KoetterKschischang2008}. On the other hand, involutory matrices over fields of characteristic two are of independent interest in symmetric-key cryptography, where they allow a block cipher to reuse the same circuitry for encryption and decryption \cite{GuptaPandeyRaySamanta2019}. 

Furthermore, one might be tempted to consider all involutions of the space of multicomplex numbers instead, but some of the resulting involutions lose their connection with, and their significance for, the Cauchy--Riemann equations. 
To remain consistent with the defining features of the subset of involutions used in \cite{Struppa2012}, we therefore directed our efforts toward the characterization of the involutions preserving the elementary units of the multicomplex numbers. 
Interestingly, this turns out to be a more difficult problem than the general one of counting all involutions; the latter is briefly discussed in Section~\ref{Ss:GeneralCase} for completeness.

In Section~\ref{S:classesMonoFunctions}, we define the concepts of \emph{elliptic-admissible} involutions ($e$-admissible involutions, for short)  and of \emph{$e$-admissible} pair $(\sigma, \im{})$. We say that $\sigma$ is $e$-admissible if there exists $\im{} \in \mI (n)$ such that $\sigma (\im{}) = - \im{}$ and that the pair $(\sigma, \im{})$ is an \emph{$e$-admissible pair} if $\sigma$ is an $e$-admissible involution, and the unit $\im{}$ satisfies $\ims{} = -1$ and $\sigma (\im{}) = - \im{}$ (see Definition \ref{D:EAdmissible}). 
We then attach a first-order system of Cauchy--Riemann type equations to every $e$-admissible pair $(\sigma, \im{})$, and identify precisely the class of functions it defines. Our Theorem \ref{T:SigmaIndependence} shows a striking phenomenon : On the one hand, the involution $\sigma$ serves to define a coordinate system of the complex structure to express the new Cauchy--Riemann equations; on the other hand, the function theory is solely described by the imaginary unit $\im{}$ and therefore the class of functions described by the Cauchy--Riemann equations is independent of the choice of the involution $\sigma$. This phenomenon justifies the introduction of the class of $\im{}$-holomorphic functions, with $\ims{} = -1$ and $\im{} \in \mI (n)$, and our Theorem \ref{T:EquivalenceSpanSWithHolomorphy} gives a sharp characterization of the class of holomorphic functions of a multicomplex variable in terms of the classes of $\im{}$-holomorphic functions. We also illustrate the theory completely for bicomplex-valued functions. 
We then describe how the framework developed in this paper can be used to describe families of $\im{}$-anti-holomorphic functions. Our Proposition \ref{P:AntiHolo} then shows how a class of anti-holomorphic functions can be defined, recovering the familiar fact that a function of one complex variable that is both holomorphic and anti-holomorphic is constant. We also introduce twisted classes obtained by allowing the differential to intertwine two different complex structures. Proposition \ref{P:Twisted} shows that every such class is an automorphic image of an $\im{}$-holomorphy class.

Finally, we show that the Wirtinger operators defined from the complex structure can be combined to obtain the Laplacian, justifying the terminology used for elliptic-admissible involutions. We then show in Corollary \ref{C:HolHarmonic} that any $\im{}$-holomorphic or $\im{}$-anti-holomorphic function is harmonic (solutions to Laplace's equation). 

\section{Background on multicomplex numbers} \label{S:Multicomplex}
In 1892, Segre \cite{Segre} introduced an algebraic structure that he called $n$-complex numbers with the goal of defining a multiplication operation between vectors of $\mC^n$, for $n \geq 2$. 

Interest in the theory of $n$-complex numbers (nowadays referred to as multicomplex numbers) and its applications have grown over the past decades. For example, they are used to introduce generalizations of concepts from real and complex analysis, e.g., multicomplex fractional operators \cite{Ceballos2022}, multicomplex hyperanalytic functions \cite{vajiac}, Laurent series \cite{Luna2017}, Riemannian and semi-Riemannian geometry \cite{vajiac2018}, and multicomplex holomorphic functions \cite{Struppa2012}. We also mention the use of multicomplex numbers to generalize the Mandelbrot set to higher dimensions \cite{Parise2019, BrouilletteRochon2019, Pelletier2009, Rochon1}, in theoretical physics to generalize the linear and nonlinear {S}chr\"{o}dinger equations \cite{RochonTremblay2004, Theaker2017}, and in machine learning to generalize complex-valued neural networks  \cite{AlpayDikiVajiac2023}.

A modern treatment of these numbers is presented in \cite{Baley} with a preface describing the history of the development of associative algebras.  We will mainly follow the presentation given in \cite{BrouilletteRochon2019}, with slight changes in notation.

\subsection{Multicomplex numbers}
The definition of the multicomplex numbers is given recursively. Let $\mM \mC(0)$ be the set of real numbers and let $\mM \mC(n)$, $n \geq 1$, be the set
    \begin{align}
        \mM \mC(n) := \{ \eta = \eta_1 + \eta_2 \im{n}\, :\, \eta_1 , \eta_2 \in \mM \mC(n- 1 ) ,\, \ims{n} = -1 \},
    \end{align}
where $\im{n}$ is a formal square root of $-1$. For example, when $n = 1$, we obtain the set $\mM \mC(1)$ of complex numbers $\eta_1 + \eta_2 \im{1}$, where $\ims{1} = -1$. When $n = 2$, we obtain the set $\mM \mC(2)$ of bicomplex numbers $\eta_1 + \eta_2 \im{2}$, where $\eta_1 , \eta_2$ are complex numbers, and $\im{2}$ is a new square root of $-1$ such that $\ims{2} = -1$, and $\im{1} \neq \im{2}$. 

We say that two multicomplex numbers $\eta$ and $\zeta$ are equal if and only if $\eta_1 = \zeta_1$ and $\eta_2 = \zeta_2$. If we let $\eta_2 = 0$ in the expression of a multicomplex number $\eta = \eta_1 + \eta_2 \im{n}$, we see that $\mM \mC(n - 1) \subseteq \mM \mC(n )$. The set of multicomplex numbers becomes a commutative ring if we endow it with the following algebraic operations:
    \begin{enumerate}[label=\arabic*)]
    \item $\eta + \zeta := (\eta_1 + \zeta_1 ) + (\eta_2 + \zeta_2) \im{n}$;
    \item $\eta \zeta := (\eta_1 \zeta_1 - \eta_2 \zeta_2) + (\eta_1 \zeta_2 + \eta_2 \zeta_1) \im{n}$.
    \end{enumerate}
    These last operations must be understood recursively.
    
    Let $\eta = \eta_1 + \eta_2 \im{n}$ be a multicomplex number. 
  Then $\eta_1 , \eta_2 \in \mM \mC(n - 1)$, so there are multicomplex numbers $\eta_{11}, \eta_{12}, \eta_{21}, \eta_{22}  \in \mM \mC(n - 2 )$ 
 such that $\eta_1 = \eta_{11} + \eta_{12} \im{n-1}$ and $\eta_2 = \eta_{21} + \eta_{22} \im{n-1}$. Replacing the $\eta_1$ and $\eta_2$ in the expression for $\eta$, we obtain the representation of $\eta\in\mM \mC(n)$ in terms of four components in $\mM \mC(n - 2)$,
        \begin{align*}
        \eta=(\eta_{11} + \eta_{12} \im{n - 1} ) + (\eta_{21} + \eta_{22} \im{n - 1}) \im{n}.
        \end{align*}
    From the definition of the multiplication, we can distribute $\im{n}$ to obtain
        \begin{align*}
        \eta=\eta_{11} + \eta_{12} \im{n-1} + \eta_{21} \im{n} + \eta_{22} \im{n-1} \im{n}.
        \end{align*}
    For example, a bicomplex number $\eta = \eta_1 + \eta_2 \im{2}$ can be expressed as a linear combination involving four real coefficients,
        \begin{align*}
        \eta = \eta_{11} + \eta_{12} \im{1} + \eta_{21} \im{2} + \eta_{22} \im{1}\im{2}.
        \end{align*}
    We can continue this process recursively until we reach the set $\mM \mC(0)$. At each stage $k$ ($1 \leq k \leq n$) of the process, we obtain a representation of a multicomplex number in terms of $2^{k}$ multicomplex numbers in $\mM \mC(n - k)$.  
    
    The representation we are interested in is the one in terms of the $2^n$ components in $\mM \mC(0)$. To be more explicit, recall that $\mI(n)$ is the set of all different possible products of the elements in the set $\{ 1 , \im{1} , \im{2} , \ldots , \im{n} \}$. Since multiplication is commutative, the cardinality of $\mI(n)$ is $2^{n}$. The elements in this set can be used as a real basis, the \emph{monomial real basis}, to express a multicomplex number in the following way: 
    \begin{align}
        \eta = \sum_{\im{} \in \mI(n)} \eta_{\im{}} \im{}, \label{Eq:CanoFormMC}
        \end{align}
    where $\eta_{\im{}} \in \mR$. This is called the \emph{canonical real representation} or \emph{the real monomial basis representation}. Using this representation and the algebraic operations defined above, we can view the set $\mM \mC(n )$ as a commutative real algebra. 

    There is an efficient way to formulate the canonical representation of a multicomplex number using the vector space $\mathbb{F}_2^n$, where $\mathbb F_2$ is the field with two elements $\{ 0, 1 \}$. This will be used later in the subsection on multicomplex holomorphy and throughout Section \ref{S:InvoPreserveElemUnits} where the $\mI (n)$-preserving involutions are characterized. 
    
    For $\vec b=(b_1,\ldots,b_n)^\top\in\mathbb F_2^n$, we introduce the following notation which emphasizes which generators from the set $\{ \im{1} , \im{2} , \ldots , \im{n} \}$ are selected to form the elements of the set $\mI (n)$: 
\[
\mathbf i^{\vec b}:=\im{1}^{b_1}\im{2}^{b_2}\cdots \im{n}^{b_n}.
\]
Multiplying two such monomials and using the relations $\ims{k}=-1$, $1 \leq k \leq n$, we obtain the multiplication rule
\begin{equation}\label{Eq:MonomialMult}
\mathbf i^{\vec a}\,\mathbf i^{\vec b}=(-1)^{\vec a^{\top} \vec b}\,\mathbf i^{\vec a\oplus\vec b}
\qquad (\vec a,\vec b\in\mathbb F_2^n),
\end{equation}
where $\vec a\oplus\vec b$ denotes the sum in $\mathbb F_2^n$ and $\vec a^{\top}\vec b=\sum_{k=1}^n a_k b_k$ is computed modulo $2$ (the quantity $\vec{a}^\top \vec{b}$ represents the number of components in $\vec{a}$ and $\vec{b}$ that are simultaneously $1$ and therefore contribute to a sign change in the product). The canonical real representation of a multicomplex number $\eta$ can be written as follows: 
\begin{equation}\label{Eq:CartesianMonomialF2n}
    \eta = \sum_{\vec b \in \mathbb F_2^n} x_{\vec b}\, \mathbf i^{\vec b},
\end{equation}
where $x_{\vec b} := \eta_{\mathbf i^{\vec b}} \in \mR$. 

\subsection{An idempotent representation for multicomplex numbers}
Of particular importance in the set of multicomplex numbers are the numbers $\eta$ such that $\eta^2=\eta$, which are called idempotent numbers. We assume for now that $n \geq 2$ and we consider
		\begin{align*}
		\be{n} := \frac{1 + \im{n-1} \im{n}}{2} \quad \text{and} \quad \bec{n} := \frac{1 - \im{n-1} \im{n}}{2} .
		\end{align*}
	An additional property that these numbers have is that $\be{n} \bec{n} = 0$. If we multiply a multicomplex number $\eta = \eta_1 + \eta_2 \im{n}$ by $\be{n}$ and by $\bec{n}$ respectively, we obtain
		\begin{align*}
		\eta \be{n} = (\eta_1 - \eta_2 \im{n-1}) \be{n} \quad \text{and} \quad \eta \bec{n} = (\eta_1 + \eta_2 \im{n - 1} ) \bec{n}.
		\end{align*}
	Since $\be{n} + \bec{n} = 1$, summing $\eta\be{n}$ and $\eta\bec{n}$ yields the idempotent representation of a multicomplex number, namely
		\begin{align*}
		\eta = (\eta_1 - \eta_2 \im{n - 1}) \be{n} + (\eta_1 + \eta_2 \im{n - 1} ) \bec{n} .
		\end{align*}
	We see that the numbers multiplying $\be{n}$ and $\bec{n}$ are elements of $\mM \mC (n - 1 )$, which we call the idempotent components of $\eta$. We will denote them by $\eta_{\be{n}}$ and $\eta_{\bec{n}}$, respectively. The idempotent representation can therefore be rewritten as
		\begin{align}
		\eta = \eta_{\be{n}} \be{n} + \eta_{\bec{n}} \bec{n} . \label{Eq:IdempotentRepresentation}
		\end{align}
  Note that two multicomplex numbers are equal if and only if their idempotent components are equal.
  
		The idempotent representation is important because it transforms the multiplication of multicomplex numbers into a componentwise multiplication. More precisely, if $\eta = \eta_{\be{n}} \be{n} + \eta_{\bec{n}} \bec{n}$ and $\zeta = \zeta_{\be{n}} \be{n} + \zeta_{\bec{n}} \bec{n}$, then we have
		\begin{align}
		\eta \zeta = \eta_{\be{n}} \zeta_{\be{n}} \be{n} + \eta_{\bec{n}} \zeta_{\bec{n}} \bec{n} . \label{Eq:IdempotentMultiplication}
		\end{align}
	
	We now apply the idempotent representation of elements of the set $\mM \mC (n - 1)$ to the idempotent components of a multicomplex number $\eta \in \mM \mC (n)$. Define
		\begin{align*}
		\be{n-1} := \frac{1 + \im{n-2}\im{n-1}}{2} \quad \text{and}\quad  \bec{n-1} := \frac{1 - \im{n-2}\im{n-1}}{2} .
		\end{align*}
	Then, the idempotent components $\eta_{\be{n}}$ and $\eta_{\bec{n}}$ of $\eta \in \mM \mC (n)$ can be written as
		\begin{align*}
		\eta_{\be{n}} = \eta_{\be{n-1}\be{n}} \be{n-1} + \eta_{\bec{n-1}\be{n}} \bec{n-1}
		\end{align*}
	and
		\begin{align*}
		\eta_{\bec{n}} = \eta_{\be{n-1}\bec{n}} \be{n-1} + \eta_{\bec{n-1}\bec{n}} \bec{n-1},
		\end{align*}
	where $\eta_{\be{n-1}\be{n}}, \eta_{\bec{n-1} \be{n}}, \eta_{\be{n-1} \bec{n}} , \eta_{\bec{n-1} \bec{n}} \in \mM \mC (n - 2)$. Replacing these in the idempotent representation of $\eta \in \mM \mC (n)$, we obtain a second idempotent representation in terms of components in $\mM \mC (n - 2 )$,
		\begin{align*}
		\eta = \eta_{\be{n-1}\be{n}} \be{n-1} \be{n} + \eta_{\bec{n-1} \be{n}} \bec{n-1} \be{n} + \eta_{\be{n-1} \bec{n}} \be{n-1} \bec{n} + \eta_{\bec{n-1}\bec{n}} \bec{n-1} \bec{n} .
		\end{align*}
	More generally, define the following elements for each integer $ k \geq 2$:
		\begin{align*}
		\be{k} := \frac{1 + \im{k-1}\im{k}}{2} \quad \text{and} \quad \bec{k} := \frac{1 - \im{k-1}\im{k}}{2} .
		\end{align*}
	We then define a family of sets $\mathcal{E} (k, n)$ inductively for $n \geq 2$ and $2 \leq k \leq n$:
		\begin{enumerate}[label=\arabic*)]
		\item $\mathcal{E} (n, n) := \{ \be{n} , \bec{n} \}$ for $k = n$;
		\item $\mathcal{E} (k, n) := \mathcal{E} (k + 1, n) \be{k} \cup \mathcal{E} (k + 1 , n) \bec{k}$ for $2 \leq k < n$.
		\end{enumerate}
	Now, for any $2 \leq k \leq n$, an induction argument shows that the cardinality of $\mathcal{E} (k, n )$ is $2^{n - k + 1}$. Also, by induction, we have that
		if $\bep{} \in \mathcal{E}(k, n)$, then $\bep{}^2 = \bep{}$, and
		if ${\bep{}}_1 , {\bep{}}_2 \in \mathcal{E} (k, n)$ with ${\bep{}}_1 \neq {\bep{}}_2$, then ${\bep{}}_1 {\bep{}}_2 = 0$.
  
		Finally, any multicomplex number $\eta \in \mM \mC (n)$ can be rewritten as
		\begin{align*}
		\eta = \sum_{\bep{} \in \mathcal{E}(k, n)} \eta_{\bep{}} \bep{} ,
		\end{align*}
	where $\eta_{\bep{}} \in \mM\mC (k-1)$ for all $\bep{} \in \mathcal{E}(k, n)$. The special case when $k = 2$ will be of particular importance to us. For this reason, we let $\mathcal{E}_n := \mathcal{E} (2, n)$, and therefore any $\eta \in \mM \mC (n)$ can be written as
    \begin{equation}  
        \eta = \sum_{\bep{} \in \mathcal{E}_n} \eta_{\bep{}} \bep{} ,    \label{Eq:IdempReprComplex}
    \end{equation}
    where $\eta_{\bep{}} \in \mM \mC (1)$ for all $\bep{} \in \mathcal{E}_n$.
	
	These new idempotent representations still have the advantage of simplifying the operation of multiplication. If
		\begin{align*}
		\eta = \sum_{\bep{} \in \mathcal{E} (k, n)} \eta_{\bep{}} \bep{} \quad \text{and} \quad \zeta = \sum_{\bep{} \in \mathcal{E}(k, n)} \zeta_{\bep{}} \bep{},
		\end{align*}
	then the following holds:
		\begin{enumerate}[label=\arabic*)]
		\item $\eta = \zeta$ if and only if $\eta_{\bep{}} = \zeta_{\bep{}}$ for all $\bep{} \in \mathcal{E} (k, n)$;
		\item $\eta + \zeta = \sum_{\bep{} \in \mathcal{E} (k, n)} (\eta_{\bep{}} + \zeta_{\bep{}}) \bep{}$;
		\item $\eta \zeta = \sum_{\bep{} \in \mathcal{E} (k , n)} (\eta_{\bep{}} \zeta_{\bep{}}) \bep{}$.
		\end{enumerate}
When $n = 1$, we set $\mathcal{E}_1 := \{1 \}$ so that the representation in terms of the set $\mathcal{E}_n$ is still valid with coefficients in $\mM \mC (1)$. 

\subsection{Topological considerations} We end this subsection with some topological concepts. 
Under the identification $\mM \mC(1) \cong \mC$, we
write $| \cdot |$ for the modulus on $\mM \mC(1)$, and we equip $\mM \mC(n)$
with the norm $\| \eta \| := \max_{\bep{} \in \mathcal{E}_n} | \eta_{\bep{}} |$; since all
norms on a finite-dimensional real vector space are equivalent, the limits
considered in the next subsection do not depend on this choice. In particular, $h \ra 0$ in
$\mM \mC(n)$ if and only if $h_{\bep{}} \ra 0$ for every $\bep{} \in \mathcal{E}_n$. 

For $a \in \mM \mC(n)$ and $r > 0$, the ball
    \[
    P ( a , r ) := \{ \eta \in \mM \mC(n)\, :\, \| \eta - a \| < r \}
    = \{ \eta \in \mM \mC(n)\, :\, | \eta_{\bep{}} - a_{\bep{}} | < r , \ \bep{} \in \mathcal{E}_n \}
    \]
is an \emph{idempotent polydisc}; these sets are open and convex, and they
form a basis of neighborhoods of $a$. Finally, given
$F : U \ra \mM \mC(n)$, we write $F_{\bep{}} (\eta ) := \pi_{\bep{}}(F (\eta ))$ for
the idempotent component of $F$, where $\pi_{\bep{}} : \mM \mC (n) \ra \mM \mC (1)$ is the projection on the idempotent component of $\bep{}$, for $\bep{} \in \mathcal{E}_n$. We then have $F_{\bep{}} : \mM \mC (n) \ra \mM \mC (1)$ for every $\bep{} \in \mathcal{E}_n$ and we can then write $F = \sum_{\bep{} \in \mathcal{E}_n} F_{\bep{}} \bep{}$. We also denote by $\iota_{\bep{}} : \mM \mC (1) \ra \mM \mC (n)$ the inclusion map $z \mapsto z \bep{}$. The maps $\pi_{\bep{}}$ and $\iota_{\bep{}}$ are both linear maps and therefore real-differentiable, for any $\bep{} \in \mathcal{E}_n$. 

We record in the following lemma the characterization of invertibility in idempotent
coordinates. The result is classical (see \cite{Baley}, and
\cite{Bicomplex, RochonShapiro} for the bicomplex case), and we include the
short proof for convenience. 

    \begin{lemma}\label{L:Invertibility}
    A multicomplex number $\eta \in \mM \mC(n)$ is invertible if and only if
    $\eta_{\bep{}} \neq 0$ for every $\bep{} \in \mathcal{E}_n$, in which case
        \[
        \eta^{-1} = \sum_{\bep{} \in \mathcal{E}_n} \eta_{\bep{}}^{-1}\, \bep{} .
        \]
    Moreover, the group $\mM \mC(n)^\times$ of invertible elements is open and
    dense in $\mM \mC(n)$.
    \end{lemma}

    \begin{proof}
    Since multiplication is componentwise in the representation \eqref{Eq:IdempReprComplex} and since $1 = \sum_{\bep{} \in \mathcal{E}_n} \bep{}$, we have $\eta \zeta = 1$ if and only if $\eta_{\bep{}} \zeta_{\bep{}} = 1$ for every $\bep{} \in \mathcal{E}_n$. 
    As $\mM \mC(1)$ is a field, this proves the criterion and the formula for the inverse. 
    The map $\eta \mapsto ( \eta_{\bep{}} \, : \, \bep{} \in \mathcal{E}_n )$ is a real-linear bijection from $\mM \mC(n)$ onto $\mM \mC(1)^{2^{n-1}}$, hence a homeomorphism, and $\mM \mC(n)^\times$ is the preimage of the set of $2^{n-1}$-tuples with nonzero entries, which is open and dense in $\mM \mC(1)^{2^{n-1}}$.
    \end{proof}

The complement of $\mM \mC(n)^\times$, often called the \emph{null cone} \cite{CourchesneTremblay2025}, is the finite union of the linear subspaces $\{ \eta : \eta_{\bep{}} = 0 \}$, for $\bep{} \in \mathcal{E}_n$, each of real codimension $2$. 

\subsection{Multicomplex holomorphy}\label{SS:MCHolo}
Let $U \subseteq \mM \mC(n)$ be an open set. Throughout this section, a function $F : U \ra \mM \mC(n)$ is said to be of class $C^1$ if it is continuously differentiable as a map between real vector spaces, and we denote by $D F (\eta ) : \mM \mC(n) \ra \mM \mC(n)$ its (real) differential at the point $\eta \in U$. 
We adopt the following differential formulation of holomorphy.

    \begin{definition}\label{D:MCHolo}
    A function $F : U \ra \mM \mC(n)$ of class $C^1$ is \emph{multicomplex holomorphic} on $U$ if, for every $\eta \in U$, the differential $D F (\eta)$ is $\mM \mC(n)$-linear, that is, if there exists $\lambda (\eta ) \in \mM \mC(n)$ such that
        \begin{align*}
        DF (\eta ) [h] = \lambda (\eta )\, h , \qquad h \in \mM \mC(n).
        \end{align*}
    We denote by $\hol(U)$ the set of multicomplex holomorphic functions on $U$.
    \end{definition}

A large part of the literature on bicomplex and multicomplex function theory
defines holomorphy through a difference quotient rather than through the
differential: a function $F$ is said to be \emph{derivable} at $\eta_0$ if the
quotient $( F (\eta ) - F (\eta_0 ) ) ( \eta - \eta_0 )^{-1}$ admits a limit as
$\eta \ra \eta_0$ with $\eta - \eta_0$ invertible (for the multicomplex case, see \cite{Baley} and, for a detailed account of the bicomplex case, see \cite{Bicomplex}). Then $F$ is a holomorphic function on an open set $U \subseteq \mM \mC (n)$ if it is derivable for every $\eta \in U$. 

In the proposition below, we reconcile the two
points of view: on an open set, 
both formulations are
equivalent to a local separation of variables in the idempotent coordinates. By Lemma~\ref{L:Invertibility}, every punctured ball centered at the origin meets $\mM \mC(n)^\times$, so the limit in condition (ii) below, taken along invertible increments, is meaningful. We emphasize that no regularity whatsoever is assumed on $F$ in condition (ii): continuity of $F$ is part of the conclusion.

    \begin{proposition}\label{P:DerivativeEquivalence}
    Let $U \subseteq \mM \mC(n)$ be open and let $F : U \ra \mM \mC(n)$ be a
    function. The following statements are equivalent.
        \begin{enumerate}[label=(\roman*)]
        \item $F$ is multicomplex holomorphic on $U$ in the sense of
        Definition~\ref{D:MCHolo}.
        \item For every $\eta_0 \in U$, the limit
            \begin{equation}\label{Eq:MCDerivative}
            F' (\eta_0 ) := \lim_{\substack{h \ra 0 \\ h \in \mM \mC(n)^\times}}
            \frac{ F (\eta_0 + h ) - F (\eta_0 )}{h} 
            \end{equation}
        exists in $\mM \mC(n)$, where $h$ ranges over the invertible elements
        such that $\eta_0 + h \in U$.
        \item Every point of $U$ admits an idempotent polydisc neighborhood
        $P ( a , r ) \subseteq U$ on which
            \begin{equation}\label{Eq:LocalSplitting}
            F (\eta ) = \sum_{\bep{} \in \mathcal{E}_n } \varphi_{\bep{}} ( \eta_{\bep{}} )\, \bep{} ,
            \qquad \eta \in P ( a , r ),
            \end{equation}
        where each $\varphi_{\bep{}}$ is a holomorphic function of one complex
        variable on the disc
        $D_{\bep{}} := \{ z \in \mM \mC(1) : | z - a_{\bep{}} | < r \}$.
        \end{enumerate}
    In this case, $F$ is of class $C^\infty$ on $U$ and, for every
    $\eta \in U$,
        \begin{equation}\label{Eq:DerivativeIdentification}
        F' (\eta ) = \lambda (\eta ) = DF (\eta ) [ 1 ]
        = \sum_{\bep{} \in \mathcal{E}_n} \varphi_{\bep{}} ' ( \eta_{\bep{}} )\, \bep{}
        \end{equation}
    in the local representation \eqref{Eq:LocalSplitting}.
    \end{proposition}

    \begin{proof}
    We prove that (iii) implies (i) and (ii), that (i) implies (iii), and
    finally that (ii) implies (iii). The identities in
    \eqref{Eq:DerivativeIdentification} are established along the way.

    (iii) $\Ra$ (i). Suppose that \eqref{Eq:LocalSplitting} holds on
    $P = P ( a , r ) \subseteq U$. 
    For each $\bep{} \in \mathcal{E}_n$, define $\pi_{\bep{}} : \mM \mC (n) \ra \mM \mC (1)$ by $\pi_{\bep{}} (\eta) := \eta_{\bep{}}$ for $\eta = \sum_{\bep{} \in \mathcal{E}_n} \eta_{\bep{}} \bep{}$ and $\iota_{\bep{}} : \mM \mC (1) \ra \mM \mC (n)$ by $\iota_{\bep{}} (z) := z \bep{}$, so that $F = \sum_{\bep{} \in \mathcal{E}_n} G_{\bep{}}$ on $P$, where $G_{\bep{}} := \iota_{\bep{}} \circ \varphi_{\bep{}} \circ \pi_{\bep{}}$. 
    Each map $\pi_{\bep{}}$ and $\iota_{\bep{}}$ is real-linear, and each $\varphi_{\bep{}}$ is holomorphic, hence of class $C^\infty$. Consequently $F$ is of class $C^\infty$ on $P$ and therefore on $U$, since $U$ is covered by such polydiscs. 
    Moreover, by the chain rule and the complex-linear nature of the differential of a holomorphic complex-valued function, 
    $$
        DG_{\bep{}} (\eta) [h] = \iota_{\bep{}} [ D\varphi_{\bep{}} (\eta_{\bep{}}) [ \pi_{\bep{}} [h]]] = \varphi_{\bep{}}' (\eta_{\bep{}}) h_{\bep{}} \bep{} ,
    $$
    and therefore
        \[
        DF (\eta ) [ h ]
        = \sum_{\bep{} \in \mathcal{E}_n}\varphi_{\bep{}} ' ( \eta_{\bep{}} )\, h_{\bep{}}\, \bep{}
        = \Big( \sum_{\bep{} \in \mathcal{E}_n} \varphi_{\bep{}} ' ( \eta_{\bep{}} )\, \bep{} \Big)\, h ,
        \qquad h \in \mM \mC(n) ,
        \]
    the second equality is because multiplication is componentwise. Thus
    $DF (\eta )$ is $\mM \mC(n)$-linear with
    $\lambda (\eta ) = \sum_{\bep{} \in \mathcal{E}_n} \varphi_{\bep{}} ' ( \eta_{\bep{}} ) \bep{}$, so
    $F \in \hol (U)$, and evaluating at $h = 1$ gives
    $\lambda (\eta ) = DF (\eta ) [ 1 ]$.

    (iii) $\Ra$ (ii). Fix $\eta_0 \in U$. Choose the polydisc in (iii) centered at $a = \eta_0$. Let $h \in \mM \mC(n)^\times$
    with $\eta_0 + h \in P ( a , r )$. By Lemma~\ref{L:Invertibility},
    $h_{\bep{}} \neq 0$ for every $\bep{}$ and inversion is componentwise, so that
        \[
        \frac{F (\eta_0 + h ) - F (\eta_0 )}{h} 
        = \sum_{\bep{} \in \mathcal{E}_n}
        \frac{ \varphi_{\bep{}} ( a_{\bep{}} + h_{\bep{}} ) - \varphi_{\bep{}} ( a_{\bep{}} ) }{ h_{\bep{}} }\,
        \bep{} .
        \]
    As $h \ra 0$, each component $h_{\bep{}}$ tends to $0$ while remaining nonzero,
    and since $\varphi_{\bep{}}$ is complex-differentiable at $a_{\bep{}}$, the ${\bep{}}$-th
    component of the right-hand side converges to $\varphi_{\bep{}} ' ( a_{\bep{}} )$.
    componentwise convergence being equivalent to convergence in
    $\mM \mC(n)$, the limit \eqref{Eq:MCDerivative} exists and equals
    $\sum_{\bep{} \in \mathcal{E}_n} \varphi_{\bep{}} ' ( a_{\bep{}} ) \bep{} = F'(\eta_0)$. This proves
    (ii).

    (i) $\Ra$ (iii). Assume that $F$ is of class $C^1$ on $U$ and that $DF (\eta )$ is multicomplex linear for any $\eta \in U$. Let $a \in U$ and choose $r > 0$ with $P := P ( a , r ) \subseteq U$. Writing $F = \sum_{\bep{} \in \mathcal{E}_n} F_{\bep{}} \bep{}$ and $\lambda = \sum_{\bep{} \in \mathcal{E}_n} \lambda_{\bep{}} \bep{}$, we see that $F_{\bep{}} = \pi_{\bep{}} \circ F$ and $\lambda_{\bep{}} = \pi_{\bep{}} \circ \lambda$. Therefore, from the chain rule, we obtain
        \begin{equation}\label{Eq:ComponentwiseDifferential}
        D F_{\bep{}} (\eta ) [ h ] = \pi_{\bep{}} [DF (\eta ) [h] ] = \pi_{\bep{}} [\lambda (\eta ) h] = \lambda_{\bep{}} (\eta )\, h_{\bep{}} ,
        \end{equation}
    for $\eta \in P$, $h \in \mM \mC(n)$, and $ \bep{} \in \mathcal{E}_n$. 

    Fix $\bep{}$ and let $\bep{}' \neq \bep{}$. If $\eta , \zeta \in P$ differ only in
    their $\bep{}'$ components, the segment joining them lies in $P$ and is of
    the form $t \mapsto \eta + t\, w\, \bep{}'$, $t \in [ 0 , 1 ]$, with $w = \zeta_{\bep{}'} - \eta_{\bep{}'} \in \mM \mC (1)$.
    By \eqref{Eq:ComponentwiseDifferential}, the derivative of the $C^1$ map $t \mapsto F_{\bep{}} ( \eta + t\, w\, \bep{}' )$
    equals $D F_{\bep{}} ( \cdot ) [ w \bep{}' ] = \lambda_{\bep{}} ( \cdot )\,
    \pi_{\bep{}} ( w \bep{}' ) = 0$, since $\pi_{\bep{}} ( w \bep{}' ) = 0$ for $\bep{} \neq \bep{}'$. Hence
    $F_{\bep{}} ( \zeta ) = F_{\bep{}} ( \eta )$ by the mean value theorem applied to the real components of $F_{\bep{}}$. Since any two points of $P$ with the
    same $\bep{}$ component differ by finitely many such one-component moves
    inside $P$, the component $F_{\bep{}}$ depends only on $\eta_{\bep{}}$ on $P$. Therefore there is
    a function $\varphi_{\bep{}} : D_{\bep{}} \ra \mM \mC(1)$ with
    $F_{\bep{}} (\eta ) = \varphi_{\bep{}} ( \eta_{\bep{}} )$ for $\eta \in P$.

    It remains to check that each $\varphi_{\bep{}}$ is holomorphic on $D_{\bep{}}$. Let
    $z \in D_{\bep{}}$ and set $\eta := a + ( z - a_{\bep{}} )\, \bep{} \in P$. The map
    $z ' \mapsto \varphi_{\bep{}} ( z ' ) = F_{\bep{}} \big( \eta + ( z ' - z ) \bep{} \big)$
    is the composition of a real-affine map with $F_{\bep{}}$, hence is differentiable at $z$, and by \eqref{Eq:ComponentwiseDifferential} its
    differential is $w \mapsto D F_{\bep{}} (\eta ) [ w \bep{} ] =
    \lambda_{\bep{}} (\eta )\, w$, which is $\mM \mC(1)$-linear. Thus $\varphi_{\bep{}}$ is
    complex-differentiable at every point of $D_{\bep{}}$, hence holomorphic on
    $D_{\bep{}}$. This proves (iii).

    (ii) $\Ra$ (iii). This is the only implication requiring some care, since
    the increments in \eqref{Eq:MCDerivative} must avoid the null cone and no
    regularity of $F$ is assumed. Let $a \in U$ and choose $r > 0$ so that
    $P := P ( a , r ) \subseteq U$.

    We first record the estimate provided by (ii). Let $q \in P$. Applying
    \eqref{Eq:MCDerivative} at $q$, there exists
    $\delta_q > 0$, which we may take small enough that
    $\| h \| < \delta_q$ implies $q + h \in U$, such that for $h \in \mM \mC(n)^\times$, $0 < \| h \| < \delta_q$,
        \begin{equation}\label{Eq:BasicEstimate}
        \big\| \big( F ( q + h ) - F ( q ) \big)\, h^{-1} - F ' ( q ) \big\|
        \leq 1 .
        \end{equation}
    For such $h$, write $Q := ( F ( q + h ) - F ( q ) )\, h^{-1}$, so that
    $F ( q + h ) - F ( q ) = Q\, h$ exactly. Taking the $\bep{}$ component and
    using $| Q_{\bep{}} | \leq \| F ' ( q ) \| + 1$, which follows from
    \eqref{Eq:BasicEstimate}, we deduce
        \begin{equation}\label{Eq:ComponentEstimate}
        \big| F_{\bep{}} ( q + h ) - F_{\bep{}} ( q ) \big| \leq C_q\, | h_{\bep{}} | ,
        \qquad C_q := \| F ' ( q ) \| + 1 , \quad \bep{} \in \mathcal{E}_n.
        \end{equation}
    We now show our main goal in two steps: (1) we first show that, on $P$, each component $F_{\bep{}}$ depends only on $\eta_{\bep{}}$; and (2) we then show that each $\varphi_{\bep{}}$ is holomorphic on $D_{\bep{}}$, which will complete the proof of the equivalence.

    (1) Fix $\bep{} \neq \bep{}'$ and let $\eta , \zeta \in P$ differ only in their
    $\bep{}'$ component, say $\zeta = \eta + w\, \bep{}'$ with
    $w \in \mM \mC(1) \setminus \{ 0 \}$. For $t \in [ 0 , 1 ]$, set
    $q_t := \eta + t\, w\, \bep{}' \in P$, and abbreviate
    $\delta_t := \delta_{q_t}$ and $C_t := C_{q_t}$. We claim that
    $t \mapsto F_{\bep{}} ( q_t )$ is locally constant on $[ 0 , 1 ]$, meaning that there is a neighborhood around any point in $[0, 1]$ on which $t \mapsto F_{\bep{}} (q_t)$ is constant.

    Fix $t \in [ 0 , 1 ]$ and let $t ' \in [ 0 , 1 ]$ with
    $0 < | t ' - t |\, | w | < \delta_t / 2$. Let $s$ be a real number with
        \[
        0 < s < \min \big( \delta_t / 4 ,\ \delta_{t '} ,\
        | t ' - t |\, | w | \big) ,
        \]
    and consider the increment and the auxiliary point
        \[
        h := s + ( t ' - t )\, w\, \bep{}' , \qquad y := q_t + h .
        \]
    The components of $h$ are $h_{\bep{}''} = s$ for $\bep{}'' \neq \bep{}'$ and
    $h_{\bep{}'} = s + ( t ' - t )\, w$, and are nonzero by the choice of $s$. Hence $h \in \mM \mC(n)^\times$ by
    Lemma~\ref{L:Invertibility}, and
    $\| h \| \leq s + | t ' - t |\, | w | < \delta_t$. On the other hand,
    since $q_{t '} = q_t + ( t ' - t )\, w\, \bep{}'$, we also have
        \[
        y = q_{t '} + s , \qquad s \in \mM \mC(n)^\times , \quad
        \| s \| = s < \delta_{t '} ,
        \]
    the real number $s = s \cdot 1$ being invertible because all of its
    components equal $s \neq 0$. Applying \eqref{Eq:ComponentEstimate} at the
    point $q_t$ with the increment $h$, whose $\bep{}$ component is $s$ because
    $\bep{} \neq \bep{}'$, and then at the point $q_{t '}$ with the increment $s$, we
    obtain
        \[
        | F_{\bep{}} ( y ) - F_{\bep{}} ( q_t ) | \leq C_t\, s
        \qquad \text{and} \qquad
        | F_{\bep{}} ( y ) - F_{\bep{}} ( q_{t '} ) | \leq C_{t '}\, s .
        \]
    Hence $| F_{\bep{}} ( q_{t '} ) - F_{\bep{}} ( q_t ) | \leq ( C_t + C_{t '} )\, s$ for
    every sufficiently small $s > 0$, and therefore
    $F_{\bep{}} ( q_{t '} ) = F_{\bep{}} ( q_t )$. This holds in the neighborhood $I_t := (t - \rho_t , t + \rho_t) \cap [0, 1]$ where $\rho_t := \delta_t / (2 |w| )$. Hence, $t\mapsto F_{\bep{}}(q_{t'})$ is constant on $I_t$ and this proves what we wanted. Now, since $[0, 1]$ is a connected set and $t \mapsto F_{\bep{}} ( q_t )$ is locally constant, it must be constant on $[0, 1]$. Therefore, we obtain $F_{\bep{}} ( \zeta ) = F_{\bep{}} ( \eta )$. As in the proof of (i) $\Ra$ (iii), it follows that $F_{\bep{}}$ depends only on the component $\eta_{\bep{}}$ and hence $F_{\bep{}} ( \eta ) = \varphi_{\bep{}} ( \eta_{\bep{}} )$ in $P$ for some function $\varphi_{\bep{}} : D_{\bep{}} \ra \mM \mC(1)$.

    (2) Let $z \in D_{\bep{}}$
    and set $\eta := a + ( z - a_{\bep{}} )\, \bep{} \in P$. Let $( z_p )_{p \geq 1}$
    be any sequence in $D_{\bep{}} \setminus \{ z \}$ with $z_p \ra z$, and set
    $h_p := z_p - z \in \mM \mC(1) \setminus \{ 0 \} \subseteq \mM \mC (n)$. All the idempotent
    components of $h_p$ equal $z_p - z \neq 0$, so
    $h_p \in \mM \mC(n)^\times$ by Lemma~\ref{L:Invertibility}, while
    $\| h_p \| = | z_p - z | \ra 0$ and $\eta + h_p \in P$ for $p$ large
    enough. By (ii) applied at $\eta$, the quotients
    $( F ( \eta + h_p ) - F ( \eta ) )\, h_p^{-1}$ converge to $F ' (\eta )$. Applying $\pi_{\bep{}}$ and using Step (1), we obtain
        \[
        \frac{ \varphi_{\bep{}} ( z_p ) - \varphi_{\bep{}} ( z ) }{ z_p - z }
        \; \longrightarrow \; \pi_{\bep{}} \big( F ' ( \eta ) \big).
        \]
    Since the sequence $( z_p )$ was arbitrary, $\varphi_{\bep{}}$ is
    complex-differentiable at $z$, with
    $\varphi_{\bep{}} ' ( z ) = \pi_{\bep{}} \big( F ' ( \eta ) \big)$. Thus $\varphi_{\bep{}}$ is
    complex-differentiable at every point of the open disk $D_{\bep{}}$, hence
    holomorphic on $D_{\bep{}}$. This proves (iii).
    
    The identities in \eqref{Eq:DerivativeIdentification} were
    obtained along the way.
    \end{proof}

    \begin{remark}
    (1) The equivalence is local in an essential way: for $n \geq 2$, the $\mM \mC(n)$-linearity of the differential at a \emph{single} point does not imply the existence of the limit \eqref{Eq:MCDerivative} at that point. For instance, for $n = 2$, consider $G : \mM \mC(2) \ra \mM \mC(2)$ defined by $G (\eta ) := ( \eta_{\bec{2}} )^2\, \be{2}$. Then $G$ is a polynomial map of the underlying real coordinates with $DG (\eta ) [ h ] = 2\, \eta_{\bec{2}}\, h_{\bec{2}}\, \be{2}$; in
    particular $DG ( 0 ) = 0$, so that $DG ( 0 )$ is $\mM \mC(2)$-linear. Nevertheless, for the invertible increments
    $h_t := t^3\, \be{2} + t\, \bec{2}$ with $t > 0$,
        \[
        \big( G ( h_t ) - G ( 0 ) \big)\, h_t^{-1}
        = t^2 \cdot t^{-3}\, \be{2} = t^{-1}\, \be{2} ,
        \]
    which is unbounded as $t \ra 0$, while along the real increments $g_t := t$ the same quotient equals $t\, \be{2} \ra 0$; hence the limit \eqref{Eq:MCDerivative} does not exist at $0$. This is consistent with Proposition~\ref{P:DerivativeEquivalence}, since $DG (\eta )$ fails to be $\mM \mC(2)$-linear at every $\eta$ with $\eta_{\bec{2}} \neq 0$, so that condition (i) holds on no neighborhood of $0$. The underlying mechanism is metric: for $n \geq 2$, the norm $\| h^{-1} \|$ may be much larger than $\| h \|^{-1}$ when $h$ approaches $0$ near the null cone, whereas $| h^{-1} | = | h |^{-1}$ when $n = 1$. A comment about this phenomenon was made for bicomplex-valued holomorphic functions in \cite{Bicomplex}.
    
    (2) For the remainder of the paper, all mappings under consideration are assumed to be of class $C^1$, unless stated otherwise.
    \end{remark}

From the canonical representation \eqref{Eq:CartesianMonomialF2n}, we write $\eta = \sum_{\vec b \in \mathbb F_2^n} x_{\vec b}\, \mathbf i^{\vec b}$, where $x_{\vec b} := \eta_{\mathbf i^{\vec b}} \in \mR$, so that the functions $x_{\vec b}$ form a system of real coordinates on $\mM \mC(n)$. Definition~\ref{D:MCHolo} is then equivalent to a system of equations of Cauchy--Riemann type. We include a proof of this result for the convenience of the reader. 
Recall that the symbol $\oplus$ is used to denote the sum of vectors in $\mathbb{F}_2^n$. 

    \begin{proposition}\label{P:MCHoloSystem}
    Let $F : U \ra \mM \mC(n)$ be of class $C^1$. Then $F$ is multicomplex holomorphic on $U$ if and only if
        \begin{equation}\label{Eq:MCCRSystem}
        \frac{\partial F}{\partial x_{\vec b \oplus \vec e_j}} = \im{j} \frac{\partial F}{\partial x_{\vec b}} \qquad (1 \leq j \leq n , \ \vec b \in \mathbb F_2^n , \ b_j = 0),
        \end{equation}
    on $U$. In particular, we have $\lambda = \partial F / \partial x_{\vec 0}$ on $U$.
    \end{proposition}

    \begin{proof}
    Observe first that $\partial F / \partial x_{\vec b} (\eta ) = DF(\eta) [\mathbf i^{\vec b}]$ and that, when $b_j = 0$, we have $\mathbf i^{\vec b \oplus \vec e_j} = \im{j} \mathbf i^{\vec b}$ without any sign correction, by \eqref{Eq:MonomialMult}. If $F$ is multicomplex holomorphic, then
    \[
    DF(\eta)[\mathbf i^{\vec b \oplus \vec e_j}] = \lambda (\eta)\, \im{j} \mathbf i^{\vec b} = \im{j}\, \lambda (\eta )\, \mathbf i^{\vec b} = \im{j}\, DF (\eta ) [\mathbf i^{\vec b}],
    \]
    which is \eqref{Eq:MCCRSystem}. Conversely, assume that \eqref{Eq:MCCRSystem} holds.  Fix $\eta \in U$. Let $\vec{b} \in \mathbb{F}_2^n$ with $\vec{b} \neq 0$ and choose $k$ such that $b_k = 1$. From \eqref{Eq:MCCRSystem}, we have
    $$
        DF (\eta ) [\im{k}] = \frac{\partial F}{\partial x_{\vec{0} \oplus \vec{e}_k}} (\eta ) = \im{k} \frac{\partial F}{\partial x_{\vec{0}}} (\eta ) = \im{k} DF (\eta ) [1] 
    $$
    and, by induction on the nonzero entries of the vector $\vec{b}$ to build the monomial $\im{}^{\vec{b}}$, we obtain
    \begin{equation}
    DF(\eta) [\mathbf i^{\vec b}] = \frac{\partial F}{\partial x_{\vec{b}}} (\eta ) = \im{}^b \frac{\partial F}{\partial x_{\vec{0}}} (\eta ) = \mathbf i^{\vec b}\, DF(\eta)[1] . \label{Eq:CAuchyRiemannForEVeryI}
    \end{equation}
    Since $DF(\eta)$ is real-linear and the monomials $\mathbf i^{\vec b}$ form a real basis of $\mM \mC(n)$, it follows that $DF (\eta) [h] = h\, DF(\eta)[1]$ for every $h \in \mM \mC(n)$, so $F$ is multicomplex holomorphic with $\lambda (\eta ) = DF (\eta ) [1] = \partial F/\partial x_{\vec 0} (\eta )$.
    \end{proof}

For $n = 1$, the system \eqref{Eq:MCCRSystem} consists of the single equation $\partial F / \partial x_{\vec e_1} = \im{1}\, \partial F / \partial x_{\vec 0}$, which is the classical Cauchy--Riemann equation written in complex form, while for $n = 2$, from \eqref{Eq:CAuchyRiemannForEVeryI}, it consists of the four bicomplex Cauchy--Riemann equations (see \cite[Theorem 7.3.1]{Bicomplex}). Observe that the proof of Proposition~\ref{P:MCHoloSystem} only uses the fact that the units $\im{1} , \ldots , \im{n}$ commute, square to $-1$, and generate $\mM \mC(n)$ as a real algebra: the same characterization, therefore, holds with respect to any ordered  $n$-tuple of commuting roots of $-1$ that generates $\mM \mC (n)$ as a real algebra. 


\section{Involutions preserving elementary units}\label{S:InvoPreserveElemUnits}

We start by giving a precise definition of what we mean by an involution on an associative real algebra $A$ with multiplicative identity.
        \begin{definition}\label{D:Involution}
        A function $f : A \ra A$ is said to be an \textit{involution} if the following conditions are satisfied:
            \begin{enumerate}[label=(\alph*)]
            \item $f (f (\eta )) = \eta$ for any $\eta \in A$;
            \item $f (\eta + \zeta ) = f (\eta ) + f (\zeta )$ and $f (\lambda \eta ) = \lambda f(\eta )$ for any $\eta , \zeta \in  A$ and $\lambda \in \mR $;
            \item $f (\eta \zeta ) = f (\eta ) f (\zeta )$ for any $\eta, \zeta \in  A$.
            \end{enumerate}
        \end{definition}
By definition, an involution is a real-algebra automorphism that is its own inverse. When $f$ is invertible but is not assumed to satisfy $f^2=\mathrm{Id}$, we simply call $f$ a real-algebra automorphism of $A$.

When $A$ is the division algebra of quaternions, we know from \cite{SangwineTodd2007, Lawson2021} that there are infinitely many involutions. If $q = a + b \im{} + c \jm{} + d \km{}$ is a quaternion with the usual rules
    \[
        \im{} \jm{} = -\jm{} \im{} = \km{} , \quad \jm{} \km{} = -\km{} \jm{} = \im{} , \quad \km{} \im{} = -\im{} \km{} = \jm{},
    \]
and $\ims{} = \jms{} = \kms{} = -1$, then any nontrivial involution is given by $f_{\mu} (q) = -\mu q \mu$, where $\mu = a_0 \im{} + b_0 \jm{} + c_0 \km{}$ with $a_0^2 + b_0^2 + c_0^2 = 1$. For other real algebras, however, the situation might change drastically.

In a previous note \cite{Parise}, the second author replaced the quaternions by the commutative ring of bicomplex numbers. From \cite[Theorem 1]{Parise}, we know that there are exactly six involutions of $\mM \mC(2)$. This result contrasts with the similar one obtained for the quaternions and therefore makes the set of bicomplex numbers akin to the complex numbers, where the only involutions are $z \mapsto z$ and $z \mapsto \Bar{z}$.

We extend the classification from \cite{Parise} to the multicomplex numbers of order $n \geq 1$. However, as mentioned in the introduction, we restrict our attention to $\mI(n)$-preserving involutions of $\mM \mC(n)$, $n \geq 1$. Recall that an $\mI(n)$-preserving involution is an involution $f$ of $\mM \mC(n)$ that maps every element of $\mI(n)$ into $\pm\mI(n)$. We therefore obtain
    \begin{theorem}\label{T:InvolutionsPreservingBasisUnits}
    The number of $\mI(n)$-preserving involutions of $\mM \mC(n)$, $n \geq 1$, is
        \begin{align*}
            \sum_{k=\lceil n/2\rceil}^{n}  \Big( \prod_{j = 1}^{k-1} \frac{2^n - 2^j}{2^k - 2^j} \Big) \Big( \prod_{j = 0}^{n-k-1} (2^k - 2^j) \Big) 2^k,
        \end{align*}
    where an empty product is understood to be equal to $1$.
    \end{theorem}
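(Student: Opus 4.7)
The plan is to encode each $\mI(n)$-preserving involution $f$ as a pair $(V,\epsilon)$, with $V\in M_n(\mathbb{F}_2)$ and $\epsilon\in\mathbb{F}_2^n$. For $v\in\mathbb{F}_2^n$, write $\eta_v:=\im{1}^{v_1}\cdots\im{n}^{v_n}$ so that $\mI(n)=\{\eta_v:v\in\mathbb{F}_2^n\}$. The $\mI(n)$-preservation condition lets me write $f(\im{k})=(-1)^{\epsilon_k}\eta_{v_k}$, where $v_k$ is the $k$-th column of $V$. The requirement $f(\im{k})^2=-1$ forces each $|v_k|$ to be odd, and $f$ being bijective is equivalent to $V$ being invertible over $\mathbb{F}_2$. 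Multiplication in $\pm\mI(n)$ is twisted by $\eta_a\eta_b=(-1)^{a\cdot b}\eta_{a+b}$ (standard $\mathbb{F}_2$-inner product), and a direct computation yields
\[
 f^2(\im{k}) \;=\; (-1)^{\epsilon_k+(V^T\epsilon)_k+\sigma_k}\,\eta_{V^2 e_k},\qquad \sigma_k:=\sum_{\substack{j<l\\ (v_k)_j=(v_k)_l=1}} v_j\cdot v_l.
\]
Hence $f$ is an involution iff (i) $V^2=I$ over $\mathbb{F}_2$ and (ii) the linear system $(I+V^T)\epsilon=\sigma(V)$ holds in $\mathbb{F}_2^n$.

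The first key step is the consistency identity $\sigma\equiv(I+V^T)m\pmod{2}$, where $m_j:=(|v_j|-1)/2\in\mathbb{Z}_{\ge 0}$. Lifting $\sigma_k$ to $\mathbb{Z}$ and expanding
\[
 2\tilde{\sigma}_k \;=\; \Bigl\|\sum_{j\in S_k} v_j\Bigr\|^2 \;-\; \sum_{j\in S_k} |v_j|, \qquad S_k:=\supp(v_k),
\]
condition (i) gives $\sum_{j\in S_k}v_j\equiv Vv_k=e_k\pmod{2}$, so the first term is $\equiv 1\pmod{4}$. Substituting $|v_j|=1+2m_j$ in the second term and reducing mod $2$ yields the identity. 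Thus $m\bmod 2$ is a particular solution of the linear system, and the number of admissible $\epsilon$ equals $|\ker(I+V^T)|$. Since $(I+V^T)^2=0$ in characteristic $2$ (from $V^2=I$) and $\mathrm{rank}(I+V^T)=\mathrm{rank}(I+V)=n-k$ where $k:=\dim\fix(V)$, this count is exactly $2^k$.

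The final step is to count admissible $V$ stratified by $k$. Setting $W:=V+I$, the conditions translate to $W^2=0$ with $\Im(W)$ contained in the even-weight hyperplane $E:=\{x\in\mathbb{F}_2^n:|x|\text{ even}\}$. Since $\Im(W)\subseteq\ker(W)$ (by $W^2=0$) has dimension $n-k$, the matrix $W$ is uniquely determined by the pair $(U,\bar W)$, where $U:=\Im(W)$ is an $(n-k)$-dimensional subspace of $E$ and $\bar W:\mathbb{F}_2^n/U\twoheadrightarrow U$ is the induced surjection (well-defined as $U\subseteq\ker W$); surjectivity forces $k\ge\lceil n/2\rceil$. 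The number of such $U$ equals $\binom{n-1}{n-k}_2=\binom{n-1}{k-1}_2$, and the number of surjections $\mathbb{F}_2^k\twoheadrightarrow\mathbb{F}_2^{n-k}$ equals $\prod_{j=0}^{n-k-1}(2^k-2^j)$. Multiplying by the $2^k$ sign choices, summing over $k$, and applying the standard identity $\binom{n-1}{k-1}_2=\prod_{j=1}^{k-1}(2^n-2^j)/(2^k-2^j)$ yields the claimed formula.

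The main obstacle will be establishing the consistency identity $\sigma\equiv(I+V^T)m\pmod{2}$, since one must pass carefully between arithmetic in $\mathbb{Z}$ and in $\mathbb{F}_2$. Once this identity is in hand, the remainder of the proof is routine linear algebra over $\mathbb{F}_2$.
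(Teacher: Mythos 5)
Your proposal is correct, and its overall architecture is the same as the paper's: encode an $\mI(n)$-preserving involution by an exponent matrix over $\mathbb{F}_2$ together with a sign vector, reduce to $W^2=0$ with image in the even-weight hyperplane, stratify by $k=\dim\ker W$, and multiply a Gaussian-binomial count of subspaces by a count of surjections (your ``choose $\Im(W)\subseteq E$ and a surjection onto it'' is the transpose of the paper's ``choose $\ker(Y^{\top})\ni\vec e$ and an injection into it''; the products agree). However, your treatment of the sign vector is not just more careful than the paper's --- it is a necessary correction. The paper translates $f(f(\im{j}))=\im{j}$ into $A^2\equiv I\pmod 2$ for the augmented matrix, which on the sign row gives the \emph{homogeneous} system $(I+V^{T})\epsilon=0$; this discards the signs produced by $\ims{k}=-1$ when the units in $\prod_k f(\im{k})^{a_{k,j}}$ collide, i.e.\ exactly your quadratic term $\sigma(V)$. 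These signs do not always cancel. For $n=4$ take
\begin{equation*}
f(\im{1})=\pm\,\im{1}\im{2}\im{3},\quad f(\im{2})=\pm\,\im{2}\im{3}\im{4},\quad f(\im{3})=\pm\,\im{4},\quad f(\im{4})=\pm\,\im{3},
\end{equation*}
so $V^2=I$ and all columns have odd weight; then $f(f(\im{1}))=(-1)^{\epsilon_2+\epsilon_3+1}\im{1}$, so the correct constraint is $\epsilon_2+\epsilon_3=1$, not $\epsilon_2+\epsilon_3=0$ (here $\sigma_1=v_1\cdot v_2+v_1\cdot v_3+v_2\cdot v_3=3\equiv 1$, and indeed the all-zero sign vector does \emph{not} give an involution). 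The theorem's count survives only because the inhomogeneous system $(I+V^{T})\epsilon=\sigma(V)$ is consistent and therefore still has exactly $2^{\dim\ker(I+V^{T})}=2^k$ solutions --- which is precisely what your identity $\sigma\equiv(I+V^{T})m\pmod 2$, with $m_j=(|v_j|-1)/2$, establishes. I checked that identity: lifting to $\mathbb{Z}$, $2\tilde\sigma_k=\bigl\|\sum_{j\in S_k}v_j\bigr\|^2-\sum_{j\in S_k}|v_j|$, the first term is $\equiv 1\pmod 4$ because $\sum_{j\in S_k}v_j\equiv e_k\pmod 2$, and the second is $1+2m_k+2(V^{T}m)_k$, giving the claim. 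So your proof is complete and, unlike the paper's, characterizes the correct set of admissible sign vectors; when writing it up you should make the consistency lemma and this point explicit, since it is the one place where the argument genuinely requires more than linear algebra over $\mathbb{F}_2$.
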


\subsection{Proof of Theorem~\ref{T:InvolutionsPreservingBasisUnits}}
If $f$ is an involution, then for any multicomplex number $\eta$, we have
        \begin{align*}
        f ( \eta ) = \sum_{\vec{b} \in \mathbb{F}_2^n} \eta_{\vec{b}} f (\im{}^{\vec{b}}).
        \end{align*}
    Since every $\im{}^{\vec{b}}\in\mI(n)\setminus\{1\}$ is a product of the generators $\im{1},\ldots,\im{n}$ and $f$ is a real-algebra automorphism, its values are completely determined by its action on $\im{1} , \im{2} , \ldots , \im{n}$.
        
    Now, we have $f(\im{k})^2 = -1$, and since we restrict our attention to involutions preserving $\mI(n)$, $f(\im{k})$ must be a signed product of an odd number of imaginary units $\im{1}$, $\im{2}$, $\ldots$, $\im{n}$. Therefore, an $\mI(n)$-preserving involution $f$ of $\mM \mC(n)$ is determined by the images of the generators $\im{1}$, $\im{2}$, $\ldots$, $\im{n}$ as follows:
\[
f(\im{k})=\im{1}^{m_{1,k}}\im{2}^{m_{2,k}}\cdots \im{n}^{ m_{n,k}}(-1)^{w_{k}},\qquad\, 1\le k\le n,
\]
where $m_{\ell, k}, w_k \in \{ 0 , 1 \}$ for each $1 \leq \ell \leq n$ and $1 \leq k \leq n$. Furthermore, for $1\le k \le n$ we have
\[
-1=f(-1)=f(\ims{k})=f(\im{k})^2=(-1)^{m_{1,k}}(-1)^{m_{2,k}}\cdots (-1)^{m_{n,k}},
\]
which implies 
\[
\sum_{\ell=1}^n m_{\ell, k}\equiv 1\pmod{2},\qquad 1\le k\le n. 
\]
Consequently, the problem reduces to counting the exponent matrices and sign vectors for which these assignments extend to involutions. 

Now, given an $n \times n$ matrix $M$ with entries in $\{0,1\}$ and columns $\vec{m}_1,\ldots,\vec{m}_n$, and given a vector $\vec w=(w_1,\ldots,w_n)^\top\in\mathbb F_2^n$, we denote by $f_{M,\vec w}$ the map defined on $1$ and the imaginary units $\im{1}$, $\im{2}$, $\ldots$, $\im{n}$ by $f_{M, \vec w} (1) := 1$ and
\[
f_{M,\vec w}(\im{k}):=(-1)^{w_k}\,\mathbf i^{\vec{m}_k},\qquad 1\le k\le n.
\]
The assignment defining $f_{M,\vec w}$ extends uniquely to a unital real-algebra endomorphism of $\mM \mC(n)$ precisely when the image of each generator squares to $-1$, equivalently when every column of $M$ has odd sum, which is the parity requirement above. Once extended, it is an automorphism if and only if $M$ is invertible over $\mathbb F_2$. In particular, this is automatic once $M^2=I$. In all cases, $f_{M,\vec w}$ maps the basis element $\mathbf i^{\vec b}$ to $\pm\, \mathbf i^{M \vec b}$. We therefore define a function $s : \mathbb F_2^n \ra \mathbb F_2$, called the \emph{sign function} of $f_{M,\vec w}$, by the relation
\begin{equation}\label{Eq:SignFunction}
f_{M,\vec w}(\mathbf i^{\vec b})=(-1)^{s(\vec b)}\,\mathbf i^{M\vec b}, \qquad \vec b \in \mathbb F_2^n.
\end{equation}
Here $M\vec b$ is computed modulo $2$.

    \begin{lemma}[Sign Lemma]\label{L:SignVectors}
    Let $M$ be an $n\times n$ matrix with entries in $\{0,1\}$ whose columns $\vec{m}_1,\ldots,\vec{m}_n$ all have odd sum, let $\vec w\in\mathbb F_2^n$, and set $Y:=M-I$, understood modulo $2$. Then the following statements hold.
        \begin{enumerate}[label=(\alph*)]
        \item For any $\vec b \in \mathbb F_2^n$, we have $s(\vec b)=\vec b^{\top}\vec w+q(\vec b)$, where
        \[
        q(\vec b):= \sum_{k = 1}^n \lfloor t_k / 2 \rfloor \pmod 2 
        \]
        and $t_k := \sum_{l = 1}^n m_{k, l} b_l \in \mathbb{Z}$.
                \item The map $f_{M, \vec{w}}$ is an involution of $\mM \mC(n)$ if and only if,
            \begin{equation}\label{Eq:AffineSign}
           M^2\equiv I\pmod 2
            \quad\text{and}\quad
            Y^{\top}\vec w\equiv \vec{c} \pmod 2,
            \end{equation}
        where $\vec c:=(q(\vec m_1),\ldots,q(\vec m_n))^\top$ depends only on the matrix $M$.
        \item If $M^2\equiv I\pmod 2$, then the system $Y^{\top}\vec w\equiv\vec c \pmod 2$ is solvable. Consequently, the set of sign vectors $\vec w$ for which $f_{M,\vec w}$ is an involution of $\mM \mC(n)$ is a coset of $\ker(Y^{\top})$ in $\mathbb F_2^n$, and its cardinality is $2^{k}$, where $k:=\dim\ker Y$.
        \end{enumerate}
    \end{lemma}

    \begin{proof}
    To simplify the notation in the proof, let $f := f_{M,\vec w}$. 

    (a) Let $\vec{b} \in \mathbb{F}_2^n$. Since $f$ is a real-algebra
endomorphism and $\im{}^{\vec{b}} = \im{1}^{b_1} \cdots \im{n}^{b_n}$,
we obtain
    \[
        f(\im{}^{\vec{b}})
        = \prod_{j=1}^n f(\im{j})^{b_j}
        = \prod_{j=1}^n \big( (-1)^{w_j} \im{}^{\vec{m}_j} \big)^{b_j}
        = (-1)^{\vec{b}^\top \vec{w}} \prod_{j=1}^n
          \big( \im{}^{\vec{m}_j} \big)^{b_j}.
    \]
Since the imaginary units commute, we may collect the powers of each
unit $\im{k}$ in the last product. No relation $\ims{k} = -1$ is used
at this stage, so the exponent of $\im{k}$ is the integer
$\sum_{j=1}^n m_{kj} b_j = t_k$, and hence
    \[
        f(\im{}^{\vec{b}})
        = (-1)^{\vec{b}^\top \vec{w}} \prod_{k=1}^n \im{k}^{t_k}.
    \]
Write $t_k = 2 q_k + \tilde{m}_k$ with $q_k = \lfloor t_k / 2 \rfloor$
and $\tilde{m}_k \in \{0, 1\}$. From $\ims{k} = -1$, we get
$\im{k}^{t_k} = (\ims{k})^{q_k} \im{k}^{\tilde{m}_k}
= (-1)^{q_k} \im{k}^{\tilde{m}_k}$, and therefore
    \[
        f(\im{}^{\vec{b}})
        = (-1)^{\vec{b}^\top \vec{w} + \sum_{k=1}^n q_k}
          \prod_{k=1}^n \im{k}^{\tilde{m}_k}
        = (-1)^{\vec{b}^\top \vec{w} + q(\vec{b})}\, \im{}^{M \vec{b}},
    \]
where the last equality holds because
$\tilde{m}_k \equiv t_k \equiv (M\vec{b})_k \pmod{2}$, so that
$(\tilde{m}_1, \ldots, \tilde{m}_n)^\top = M\vec{b}$ in
$\mathbb{F}_2^n$. This completes the proof of (a).

    (b) Because $f$ is a unital algebra endomorphism, it is an involution if and only if $f\circ f$ fixes the generators $\im{1}, \ldots, \im{n}$. Using \eqref{Eq:SignFunction} twice together with part (a),
    \[
    f(f(\im{k})) = (-1)^{w_k} f (\mathbf i^{\vec{m}_k}) = (-1)^{w_k + s(\vec{m}_k)}\, \mathbf i^{M \vec{m}_k} = (-1)^{w_k + \vec{m}_k^{\top} \vec w + q(\vec{m}_k)}\, \mathbf i^{M^2 \vec e_k}, 
    \]
    where $\vec e_k$ denotes the $k$-th standard basis vector of $\mathbb F_2^n$, for $1 \leq k \leq n$. Therefore $f(f(\im{k})) = \im{k}$ for every $1 \leq k \leq n$ if and only if $M^2 \vec e_k \equiv \vec e_k$ and $w_k + \vec{m}_k^{\top}\vec w \equiv q (\vec{m}_k) \pmod 2$ for every $1 \leq k \leq n$. The first family of conditions is $M^2 \equiv I \pmod 2$ and, since subtraction and addition agree in $\mathbb F_2$, the second family of conditions is exactly the affine system $(I + M^{\top}) \vec w \equiv Y^{\top} \vec w \equiv \vec c \pmod 2$.

    (c) Assume that $M^2\equiv I\pmod2$, put $Y=M-I$, and set
$g:=f_{M,\vec0}$. From (a), the sign function of $g$ is $q (\vec{b})$. 

We first show that $g \circ g = f_{I, \vec{c}}$. Let $\vec e_k$ denote the $k$-th standard basis
vector, so that $\im{k} = \mathbf i^{\vec e_k}$. Since $M\vec e_k = \vec m_k$ has entries
in $\{0,1\}$, $q(\vec e_k) = 0$. Therefore
\[
g\big(g(\im{k})\big)
= g( (-1)^{q (\vec{e}_k )} \mathbf i^{\vec m_k}) = g (\im{}^{\vec{m}_k}) 
= (-1)^{q(\vec m_k)}\, \mathbf i^{M \vec m_k}
= (-1)^{c_k}\, \im{k},
\]
because $M\vec m_k = M^2 \vec e_k \equiv \vec e_k \pmod 2$ and $c_k = q(\vec m_k)$ by the
definition of $\vec c$. Thus $g \circ g$ agrees with $f_{I, \vec c}$ on the generators, and
since both are real-algebra automorphisms, $g \circ g = f_{I, \vec c}$.  

We can now show that the system $Y^{\top} \vec{w} \equiv \vec{c} \pmod 2$ has a solution. If $\vec v\in\ker Y$, then
$M\vec v=\vec v$, and the sign formula gives
\[
g(\mathbf i^{\vec v})
=(-1)^{q(\vec v)}\mathbf i^{\vec v}.
\]
Thus $(g\circ g)(\mathbf i^{\vec v})=\mathbf i^{\vec v}$. On the
other hand,
\[
(g\circ g)(\mathbf i^{\vec v})
=f_{I,\vec c}(\mathbf i^{\vec v})
=(-1)^{\vec c^\top\vec v}\mathbf i^{\vec v}.
\]
Therefore $\vec c^\top\vec v=0$ for every $\vec v\in\ker Y$ and hence $\vec{c} \in (\ker Y )^{\perp}$. As the standard bilinear form on $\mathbb F_2^n$ is
nondegenerate, $(\ker Y)^{\perp} = \operatorname{Ran}(Y^{\top})$, so $\vec c \in
\operatorname{Ran}(Y^{\top})$. This is precisely the assertion that the system $Y^{\top}
\vec w \equiv \vec c \pmod 2$ is solvable.

Finally, fix a particular solution $\vec w_0$. For any $\vec w \in \mathbb F_2^n$,
\[
Y^{\top} \vec w \equiv \vec c
\iff Y^{\top}(\vec w - \vec w_0) \equiv \vec 0
\iff \vec w - \vec w_0 \in \ker(Y^{\top}),
\]
so the solution set is the coset $\vec w_0 + \ker(Y^{\top})$. By part (b), under the
standing hypothesis $M^2 \equiv I$, these are exactly the sign vectors $\vec w$ for which
$f_{M, \vec w}$ is an involution of $\mM \mC(n)$. Hence they form a coset of
$\ker(Y^{\top})$ in $\mathbb F_2^n$. Its cardinality is $|\ker(Y^{\top})| = 2^{\dim \ker
Y^{\top}}$, and since $Y$ and $Y^{\top}$ share the same rank, and therefore the same
nullity, we have $\dim \ker Y^{\top} = \dim \ker Y = k$. The cardinality is thus $2^k$. 
    \end{proof}

With this setup, we are now ready to prove Theorem~\ref{T:InvolutionsPreservingBasisUnits}.

\begin{proof}[Proof of Theorem~\ref{T:InvolutionsPreservingBasisUnits}]
Every unital real-algebra endomorphism that maps the canonical monomial basis into signed monomials is encoded by a matrix $M$ over $\mathbb F_2$, whose columns have odd weight, together with a sign vector $\vec w\in\mathbb F_2^n$. We write $f=f_{M,\vec w}$. Set $Y:=M-I$ over $\mathbb F_2$. 

By Lemma~\ref{L:SignVectors}(b), an encoded map $f_{M,\vec w}$ is an involution if and only if every column of $M$ has odd sum, $M^2\equiv I\pmod 2$, and the affine sign condition in~\eqref{Eq:AffineSign} holds. We therefore first focus on counting the number of matrices $M$ with entries in $\{ 0, 1 \}$ such that $M^2 \equiv I \pmod 2$ and each of its columns has odd sum, and then finding the number of associated sign vectors $\vec{w}$ satisfying $Y^\top \vec{w} \equiv \vec{c} \pmod 2$. 

The equation $M^2 \equiv I \pmod 2$ is equivalent to $Y^2 \equiv 0 \pmod 2$, and since every column of $M$ has odd sum, every column of $Y$ has even sum. The problem of enumerating the matrices $M$ now becomes the problem of enumerating $n\times n$ matrices $Y$ such that
\begin{enumerate}[label=\arabic*)]
\item The entries of $Y$ are equal to 0 or 1;
    \item $Y^2\equiv 0\pmod{2}$;
    \item The sum of each column of $Y$ is $\equiv 0 \pmod{2}$.
\end{enumerate}
We denote by $k$ the dimension of the kernel of $Y$, that is, $k:= \dim(\ker (Y))$. Because $Y^2 \equiv 0 \pmod 2$, we have $\operatorname{Ran} Y \subseteq \operatorname{ker} Y$. Hence, $n - k = \operatorname{rank} Y \leq k$ and so
\begin{equation}\label{e1}
    k\ge n/2.
\end{equation}
Observe that the dimension of the kernel of $Y$ is equal to the dimension of the kernel of $Y^\top$. It will be easier to work with this transpose. 

We use the notation $\vec{e}:=(1,1,\ldots,1)^\top$.  Condition 3 on the matrix $Y$ is equivalent to 
\[
\vec{e}\in \ker (Y^\top).
\]
For a fixed value of $k$, the number of ways of choosing $\ker (Y^\top)$ with the restriction that $\vec{e}\in \ker (Y^\top)$ is given by
\begin{equation}\label{e2}
    B(k,n):=\prod_{j=1}^{k-1}\frac{2^n-2^j}{2^k-2^j},
\end{equation}
with the convention that $B(k,n)=1$ for $k=1$. To see this, first note that the number of ways of choosing an ordered sequence of $k$ linearly independent vectors (with $\vec{e}$ as the first vector of the sequence) is given by
\begin{equation}\label{e2a}
\prod_{j=1}^{k-1}(2^n-2^j),
 \end{equation}
since when choosing a new vector, one cannot choose any linear combination of previously chosen vectors. Now, many choices of vector sequences (or basis choices) will describe the same subspace.  Given a basis of linearly independent vectors, the number of ways of choosing a basis that will span the same subspace (under the condition that $\vec{e}$ is the first vector of the ordered basis) is given by 
\begin{equation}\label{e2b}
\prod_{j=1}^{k-1}(2^k-2^j).
 \end{equation}
 Equality \eqref{e2} follows from \eqref{e2a} and \eqref{e2b}.

Now, suppose that the kernel $\ker (Y^{\top})$ has been chosen. Let $k$ again be the dimension of the kernel of $Y^{\top}$. Let $\vec{u}_1,\vec{u}_2,\ldots, \vec{u}_k$ be a basis of $\ker (Y^\top)$. Let $\vec{v}_1, \vec{v}_2,\ldots,\vec{v}_{n-k}$ be vectors such that 
$\vec{u}_1,\vec{u}_2,\ldots \vec{u}_k,\vec{v}_1, \vec{v}_2,\ldots,\vec{v}_{n-k}$ is a basis of $\mathbb F_2^n$. We now count the possible linear maps $Y^\top$ with this prescribed kernel.

Since $Y^2\equiv 0\pmod 2$, we deduce that $(Y^\top)^2\equiv 0 \pmod 2$. This last identity implies that
\[
(Y^\top)^2 \vec{v}_j=\vec{0},\qquad 1\le j\le n-k,
\]
and thus
\[
Y^\top \vec{v}_j\in \ker (Y^\top ),\qquad 1\le j\le n-k.
\]
We therefore have
\[
    Y^\top\vec{v}_j=\sum_{s=1}^k r_{s,j} \vec{u}_s , 
\]
where $r_{s, j} \in \{ 0 , 1 \}$ and $1 \leq j \leq n - k$. The number of ways to choose the value of $Y^\top\vec{v}_1$ is given by
\[
2^k-1.
\]
This comes from the fact that $\vec{v}_1 \not\in \ker (Y^{\top})$ and therefore the $r_{s, j}$ cannot all be zero. One can now choose the values of $Y^\top \vec{v}_2, Y^\top\vec{v}_3,\ldots , Y^\top \vec{v}_{n - k}$ under the restriction that the vectors $Y^\top \vec{v}_j$ must be linearly independent. To see why these vectors must be linearly independent, suppose that $\vec{w}$ is a linear combination of the vectors $\vec{v}_j$ such that $Y^\top\vec{w}=\vec{0}$.  This implies $\vec{w}\in \ker (Y^\top )$.  We thus have exhibited a vector $\vec{w}$ that can be expressed both as a linear combination of the vectors $\vec{u}_j$ and as a linear combination of the vectors $\vec{v}_j$.  This contradicts the fact that  $\vec{u}_1,\vec{u}_2,\ldots \vec{u}_k,\vec{v}_1, \vec{v}_2,\ldots,\vec{v}_{n-k}$ is a basis of $\mathbb F_2^n$. This linear-independence constraint implies that the number of possible values of $Y^\top\vec{v}_j$ is $2^k-2^{j-1}$ for $1 \leq j \leq n - k$. It follows that the number of possible maps $Y^{\top}$ with the prescribed kernel is
\begin{equation}\label{e3}
D(k,n)=\prod_{j=0}^{n-k-1}(2^k-2^j).
\end{equation}

Putting everything together, the number of ways of choosing the matrix $Y^\top$, and thus $Y$, or equivalently the number of ways of choosing the matrix $M$, is given by
\[
B(k,n)D(k,n)=\prod_{j=1}^{k-1}\frac{2^n-2^j}{2^k-2^j}\prod_{j=0}^{n-k-1}(2^k-2^j).
\]

Finally, to fully specify the involution $f = f_{M, \vec w}$, one has to choose the sign vector $\vec w$. By Lemma~\ref{L:SignVectors} (c), for each admissible matrix $Y$ (or equivalently for each admissible matrix $M$), the sign vectors $\vec w$ form a coset of $\ker (Y^{\top})$ in $\mathbb F_2^n$, namely the solution set of the affine system $Y^{\top} \vec w \equiv \vec c \pmod 2$ in \eqref{Eq:AffineSign}. If the dimension of the kernel of $Y^\top$ is equal to $k$, then the number of ways of choosing the components of $\vec{w}$ is therefore equal to
\begin{equation}\label{e4}
2^k.
\end{equation}

From equations \eqref{e1}, \eqref{e2}, \eqref{e3}, and \eqref{e4}, we conclude that the number of $\mI(n)$-preserving involutions is
\[
\sum_{\lceil n/2 \rceil \le k\le n}D(k,n)B(k,n)2^k=\sum_{\lceil n/2 \rceil \le k\le n} \Big( \prod_{j = 1}^{k-1} \frac{2^n - 2^j}{2^k - 2^j} \Big) \Big( \prod_{j = 0}^{n-k-1} (2^k - 2^j) \Big) 2^k.
\]
This completes the proof.
\end{proof}

\subsection{Concluding remarks on unrestricted involutions}\label{Ss:GeneralCase}

We restricted attention to $\mI(n)$-preserving involutions because unrestricted involutions may lose their direct connection with the unit monomials and with the Cauchy--Riemann systems studied below. The unrestricted counting problem is nevertheless simple in idempotent coordinates.

Recall that any multicomplex number $\eta$ can be expressed using the idempotents from $\mathcal{E}_n$ : 
\[
\eta=\sum_{\bep{} \in \mathcal{E}_n} \eta_{\bep{}} \bep{},
\qquad \eta_{\bep{}} \in\mM \mC(1)\cong\mC .
\]
Multiplication is componentwise in this representation. Hence, as a real algebra, 
$$
\mM\mC(n) \cong \mathbb C^{2^{n-1}}.
$$
It follows that every $\mathbb R$-algebra automorphism of $\mM\mC(n)$ is obtained by permuting the $2^{n-1}$ complex components and, independently on each component, applying either the identity or complex conjugation. Thus, 
$$
\operatorname{Aut}_{\mathbb R}(\mM\mC(n)) \cong C_2 \wr S_{2^{n-1}} \cong B_{2^{n-1}},
$$
where $C_2$ is the cyclic group of order $2$, $S_n$ the symmetric group on $n$ symbols, and $B_n$ denotes the group of signed permutations of length $n$, namely permutations of $\{1, 2,\ldots,n\}$ written in one-line notation where each entry may have a bar over it (which also corresponds to the group of symmetries of a hypercube, the hyperoctahedral group). It readily follows that there are
$2^{2^{n-1}}(2^{n-1})!$ real-algebra automorphisms of $\mM\mC(n)$, and 
$$
(2^{n-1})!\sum_{k = 0}^{\lfloor 2^{n-2}\rfloor} \frac{2^{2^{n-1}-2k}}{k! (2^{n-1} - 2k)!}
$$
involutions of $\mM\mC(n)$.
The latter is a consequence of known results on signed involutions or wreath products (see \cite{countinginvo} and \cite{countingwr}, respectively), or of a simple counting argument. Indeed, if the underlying permutation has $k$ transpositions and $2^{n-1}-2k$ fixed points, then there are $$\frac{(2^{n-1})!}{2^k k!(2^{n-1}-2k)!}$$ ways to choose this permutation. The signs may be chosen in $2^{2^{n-1}-k}$ ways since each fixed point has an arbitrary sign, and each transposition has two compatible choices of signs.

Finally, if we change the condition $f^2 (\eta) = \eta$ for any $\eta \in \mM \mC (n)$ to the following condition: 
    \begin{equation}
        f^r (\eta ) = \eta \qquad (\forall \eta \in \mM \mC (n)) \label{eq:rInvoCondition}
    \end{equation}
where $r > 2$ is an integer, then similar arguments as above lead to the following formula for the number of automorphisms satisfying \eqref{eq:rInvoCondition}:
$$
        2^{2^{n-1}}\sum_{\sigma\in S_{2^{n-1},r}}\Big(\prod_{k|r,\, r/k \mathrm{\, is\, odd}}\frac{1}{2^{\mathrm{cyc}_k(\sigma)}}\Big).
    $$
In the last formula, the set $S_{2^{n-1}, r}$ is the set of permutations $\sigma$ such that $\sigma^r = \mathrm{Id}$ and $\mathrm{cyc}_k (\sigma )$ is the number of disjoint cycles of length $k$ in $\sigma$. When $r > 2$ is a prime number, the formula becomes
    $$
        (2^{n-1})!\sum_{k=0}^{\lfloor 2^{n-1}/r\rfloor}\frac{2^{(r-1)k}}{k!r^k(2^{n-1}-rk)!} .
    $$


\section{Cauchy--Riemann classes of multicomplex-valued functions}\label{S:classesMonoFunctions}
Inspired by the theory of complex structures (see, for example, \cite{Huybrechts2005}), we associate a complex structure and a Cauchy--Riemann system with every $e$-admissible pair. We compare the resulting classes with multicomplex holomorphy and then study anti-holomorphic and twisted variants.

\subsection{Elliptic-admissible involutions}\label{SS:EAdmissibleCounting}

\begin{definition}\label{D:EAdmissible}
Let $n\geq 1$. An $\mI (n)$-preserving involution $\sigma:\mM \mC(n)\to\mM \mC(n)$ is \emph{elliptic-admissible} (abbreviated $e$-admissible) if there exists $\im{}\in\mI(n)$ such that $\ims{}=-1$ and $\sigma(\im{})=-\im{}$. In that case, $(\sigma,\im{})$ is called an \emph{$e$-admissible pair}.
\end{definition}

The distinguished unit need not be unique. For example, the involution of $\mM \mC(2)$ defined by $\im{1}\mapsto-\im{1}$ and $\im{2}\mapsto-\im{2}$ forms $e$-admissible pairs with both $\im{1}$ and $\im{2}$. By contrast, the swap $\im{1}\mapsto\im{2}$, $\im{2}\mapsto\im{1}$ is not $e$-admissible.

Let $(\sigma,\im{})$ be an $e$-admissible pair, and let $\cL_{\im{}}$ denote multiplication by $\im{}$ on $\mM \mC(n)$. Since $\ims{}=-1$, we have $\cL_{\im{}}^2=-\mathrm{Id}$; hence $\cL_{\im{}}$ is a complex structure on the underlying real vector space. With respect to this structure, $\mM \mC(n)$ has complex dimension $m:=2^{n-1}$ and scalar multiplication
\[
(a+b\sqrt{-1})\cdot\eta:=a\eta+b\im{}\eta,
\qquad a,b\in\mR.
\]
The $e$-admissibility of $(\sigma,\im{})$ means precisely that $\sigma$ is a conjugation for this complex structure.

    \begin{lemma}\label{L:Conjugation}
    Let $\sigma$ be an $\mI (n)$-preserving involution of $\mM \mC(n)$ and let $\im{} \in \mI(n)$ with $\ims{} = -1$. Then $(\sigma , \im{})$ is an $e$-admissible pair if and only if
        \begin{align*}
        \sigma ( \im{} \eta ) = - \im{}\, \sigma (\eta) , \qquad \eta \in \mM \mC(n),
        \end{align*}
    that is, if and only if $\sigma$ is conjugate-linear with respect to the complex structure $\cL_{\im{}}$.
    \end{lemma}

    \begin{proof}
    If $(\sigma, \im{})$ is $e$-admissible, then $\sigma (\im{} \eta ) = \sigma (\im{})\, \sigma (\eta ) = - \im{}\, \sigma (\eta )$ since $\sigma$ is multiplicative. Conversely, taking $\eta = 1$ gives $\sigma (\im{}) = -\im{}$.
    \end{proof}

The next lemma is the analogue of the decomposition $\mC = \mR \oplus \sqrt{-1}\, \mR$ determined by the complex conjugation.

    \begin{lemma}\label{L:FixedDecomposition}
    Let $(\sigma , \im{})$ be an $e$-admissible pair and let
        \begin{align*}
        \cB_{\sigma} := \{ \eta \in \mM \mC(n)\, :\, \sigma (\eta ) = \eta \}
        \end{align*}
    be the fixed set of $\sigma$. Then $\cB_\sigma$ is a real subalgebra of $\mM \mC(n)$ of dimension $2^{n-1}$, and
        \begin{align*}
        \mM \mC(n) = \cB_\sigma \oplus \im{}\, \cB_\sigma.
        \end{align*}
    In other words, every $\eta \in \mM \mC(n)$ can be written uniquely as $\eta = u + \im{} v$ with $u , v \in \cB_\sigma$.
    \end{lemma}

    \begin{proof}
    Since $\sigma$ is a real-algebra automorphism, its fixed set $\cB_\sigma$ is a real subalgebra containing $1$. Every $\eta \in \mM \mC(n)$ decomposes as
    \[
    \eta = \frac{\eta + \sigma (\eta)}{2} + \frac{\eta - \sigma (\eta )}{2},
    \]
    where the first term is fixed by $\sigma$ and the second one is mapped to its negative. It therefore suffices to show that $\{ \eta\, :\, \sigma (\eta ) = - \eta \} = \im{}\, \cB_\sigma$. If $u \in \cB_\sigma$, then $\sigma (\im{} u ) = -\im{}\, \sigma (u) = -\im{} u$ by Lemma~\ref{L:Conjugation}. Conversely, if $\sigma (\zeta ) = - \zeta$, then $\zeta = \im{} ( - \im{} \zeta )$ and $\sigma (- \im{} \zeta) = \im{}\, \sigma (\zeta ) = - \im{} \zeta$, again by Lemma~\ref{L:Conjugation}, so $-\im{} \zeta \in \cB_\sigma$. It remains to prove that the sum is direct. If $u + \im{} v = 0$ with $u , v \in \cB_\sigma$, applying $\sigma$ gives $u - \im{} v = 0$, and adding the two equalities yield $u = 0$, whence $v = - \im{} (\im{} v ) = 0$. Finally, since $\cL_{\im{}}$ is invertible, $\dim_{\mR} \im{} \cB_\sigma = \dim_{\mR} \cB_\sigma$, and the equality $2 \dim_{\mR} \cB_\sigma = 2^n$ gives the dimension claim.
    \end{proof}

\subsection{General Cauchy--Riemann equations}\label{SS:GeneralCR}
The subalgebra $\cB_\sigma$ is thus a \emph{real form} of the complex vector space $(\mM \mC(n) , \cL_{\im{}})$, exactly as $\mR$ is a real form of $\mC$ with respect to the usual conjugation. This suggests the following definition.

    \begin{definition}\label{D:SigmaCR}
    Let $(\sigma , \im{})$ be an $e$-admissible pair, let $U \subseteq \mM \mC(n)$ be open, and let $F : U \ra \mM \mC(n)$ be of class $C^1$. We say that $F$ satisfies the \emph{$(\sigma , \im{})$-Cauchy--Riemann equations} on $U$ if
        \begin{equation}\label{Eq:SigmaCR}
        D F (\eta ) [ \im{} \beta ] = \im{}\, DF (\eta ) [\beta] , \qquad \eta \in U , \ \beta \in \cB_\sigma.
        \end{equation}
    \end{definition}

To justify the terminology, choose a real basis $\beta_0 = 1 , \beta_1 , \ldots , \beta_{m - 1}$ of $\cB_\sigma$, where $m = 2^{n-1}$. By Lemma~\ref{L:FixedDecomposition}, every $\eta \in \mM \mC(n)$ can be written uniquely as
    \begin{align*}
    \eta = \sum_{\ell = 0}^{m-1} ( u_\ell + \im{} v_\ell )\, \beta_\ell , \qquad u_\ell , v_\ell \in \mR,
    \end{align*}
and the functions $(u_0, v_0 , \ldots , u_{m-1}, v_{m-1})$ form a system of real coordinates on $\mM \mC(n)$. Since $\partial F / \partial u_\ell = DF (\eta ) [\beta_\ell]$ and $\partial F / \partial v_\ell = DF (\eta ) [\im{} \beta_\ell]$, condition \eqref{Eq:SigmaCR} is equivalent to the system of $m$ equations
    \begin{equation}\label{Eq:SigmaCRCoordinates}
    \frac{\partial F}{\partial v_\ell} = \im{} \frac{\partial F}{\partial u_\ell} , \qquad 0 \leq \ell \leq m - 1,
    \end{equation}
or, introducing the Wirtinger-type operators
    \begin{align*}
    \overline{\partial}_\ell^{\, \sigma , \im{}} := \frac{1}{2} \Big( \frac{\partial}{\partial u_\ell} + \im{} \frac{\partial}{\partial v_\ell} \Big) , \qquad 0 \leq \ell \leq m -1,
    \end{align*}
to the system $\overline{\partial}_\ell^{\, \sigma , \im{}} F = 0$, $0 \leq \ell \leq m - 1$. Moreover, writing $F = P + \im{} Q$, where $P$ and $Q$ take values in $\cB_\sigma$, and comparing the components in the decomposition of Lemma~\ref{L:FixedDecomposition}, the system \eqref{Eq:SigmaCRCoordinates} takes the classical form
    \begin{align*}
    \frac{\partial P}{\partial u_\ell} = \frac{\partial Q}{\partial v_\ell} , \qquad \frac{\partial P}{\partial v_\ell} = - \frac{\partial Q}{\partial u_\ell}, \qquad 0 \leq \ell \leq m - 1.
    \end{align*}

The following theorem identifies exactly the class of functions defined by the $(\sigma , \im{})$-Cauchy--Riemann equations. It also reveals that the class depends only on the unit $\im{}$, and not on the involution $\sigma$. In the following statement, the imaginary number $i$ (not in bold) refers to the classical imaginary unit of the set of complex numbers $\mC$. 

    \begin{theorem}\label{T:SigmaIndependence}
    Let $(\sigma , \im{})$ be an $e$-admissible pair, let $U \subseteq \mM \mC(n)$ be open, and let $F : U \ra \mM \mC(n)$ be of class $C^1$. The following statements are equivalent.
        \begin{enumerate}[label=(\roman*)]
        \item $F$ satisfies the $(\sigma , \im{})$-Cauchy--Riemann equations on $U$;
        \item $DF (\eta ) \circ \cL_{\im{}} = \cL_{\im{}} \circ D F (\eta)$ for every $\eta \in U$;
        \item for one (equivalently, for every) real basis $\beta_0 , \ldots , \beta_{m-1}$ of $\cB_\sigma$, the function $F$, viewed in the complex coordinates $z_\ell := u_\ell + i\, v_\ell$, $0 \leq \ell \leq m - 1$ where $i^2 = -1$, is a holomorphic mapping of $m$ complex variables.
        \end{enumerate}
    In particular, the set of solutions of the $(\sigma , \im{})$-Cauchy--Riemann equations depends only on the unit $\im{}$, and not on the involution $\sigma$.
    \end{theorem}

    \begin{proof}
    (ii) $\Ra$ (i) is immediate since $\cB_\sigma \subseteq \mM \mC(n)$.

    (i) $\Ra$ (ii). Let $h \in \mM \mC(n)$ and write $h = u + \im{} v$ with $u , v \in \cB_\sigma$, using Lemma~\ref{L:FixedDecomposition}. Then $\im{} h = \im{} u - v$ and, using \eqref{Eq:SigmaCR} twice together with the real-linearity of $DF (\eta )$,
    \begin{align*}
    DF (\eta ) [\im{} h] &= DF (\eta )[\im{} u] - D F (\eta ) [v] = \im{}\, DF (\eta )[u] - DF (\eta ) [v] , \\
    \im{}\, DF (\eta ) [h] &= \im{}\, DF (\eta) [u] + \im{}\, DF (\eta ) [\im{} v] = \im{}\, DF (\eta )[u] + \ims{}\, DF (\eta )[v].
    \end{align*}
    The two right-hand sides agree because $\ims{} = -1$.

    (ii) $\Leftrightarrow$ (iii). Fix a real basis $\beta_0 , \ldots , \beta_{m-1}$ of $\cB_\sigma$ and let $\Phi : \mM \mC(n) \ra \mC^m$ be the real-linear isomorphism sending $\eta = \sum_\ell (u_\ell + \im{} v_\ell )\, \beta_\ell$ to $(u_0 + i\, v_0 , \ldots , u_{m-1} + i\, v_{m-1})$. By construction, $\Phi \circ \cL_{\im{}} = ( i \, \cdot ) \circ \Phi$, where $i \, \cdot$ denotes the componentwise multiplication by $i$ on $\mC^m$. The function $G := \Phi \circ F \circ \Phi^{-1}$, defined on the open set $\Phi (U ) \subseteq \mC^m$, is of class $C^1$ and its differential is $DG = \Phi \circ DF \circ \Phi^{-1}$ at corresponding points. Condition (ii) is therefore equivalent to the complex-linearity of the differential of $G$ at every point of $\Phi (U)$, which is the definition of a holomorphic mapping in $m$ complex variables (see, e.g., \cite[Chapter 1]{Krantz2001}). This is statement (iii). Since condition (ii) does not refer to the basis, statement (iii) holds for one basis if and only if it holds for every basis.

    The last claim of the theorem is clear, since condition (ii) involves only $\im{}$.
    \end{proof}

In view of Theorem~\ref{T:SigmaIndependence}, for an open set $U \subseteq \mM \mC(n)$ and a unit $\im{} \in \mI(n)$ with $\ims{} = -1$, we denote by $\hol_{\im{}} (U)$ the set of functions of class $C^1$ satisfying condition (ii) of the theorem, and we call its elements \emph{$\im{}$-holomorphic functions}. Note that condition (ii) makes sense for every such unit $\im{}$, whether or not an involution is specified. 

Theorem~\ref{T:SigmaIndependence} has several immediate consequences. Since holomorphic mappings of several complex variables are real-analytic, every $\im{}$-holomorphic function is real-analytic. 
Moreover, a computation based on condition (ii) and the Leibniz rule shows that $\hol_{\im{}} (U)$ is a real algebra containing the constants. Second, and more importantly for our purposes, the involution $\sigma$ itself does not affect the class of functions obtained: two $e$-admissible pairs $(\sigma_1 , \im{})$ and $(\sigma_2 , \im{})$ sharing the same unit determine the same class of $\im{}$-holomorphic functions, presented in different real coordinate systems. The involution selects the real form $\cB_\sigma$ of the complex vector space $(\mM \mC(n) , \cL_{\im{}})$, but the underlying function theory is governed by $\im{}$ alone as a consequence of (ii). 

We now compare the classes $\hol_{\im{}} (U)$ with the multicomplex holomorphic functions. Recall that an odd vector $\vec{b} \in \mathbb{F}_2^n$ is characterized by the condition $\vec{e}^\top \vec{b} \equiv 1 \pmod 2$. 

    \begin{proposition}\label{P:HoloInclusion}
    Let $U \subseteq \mM \mC(n)$ be a nonempty open set and let $\im{}\in\mI(n)$ with $\ims{}=-1$. Then $\hol(U)\subseteq\hol_{\im{}}(U)$, and the inclusion is strict when $n\geq2$.
    \end{proposition}

    \begin{proof}
    If $F \in \hol(U)$ with $DF (\eta )[h] = \lambda (\eta )\, h$, then, by the commutativity of multiplication,
    \[
    DF (\eta) [\im{} h] = \lambda (\eta )\, \im{} h = \im{}\, \lambda (\eta )\, h = \im{}\, DF (\eta )[h],
    \]
    so condition (ii) of Theorem~\ref{T:SigmaIndependence} holds and $F \in \hol_{\im{}} (U)$.

    For the strictness when $n \geq 2$, assume first that $\im{} = \im{1}$ and consider the real-linear function $F (\eta ) := x_{\vec 0} + x_{\vec e_1} \im{1}$, for which $DF (\eta ) = F$ for every $\eta$. Fix $\eta, h \in \mM \mC(n)$. Comparing the coefficients of $1$ and $\im{1}$ in $\im{1} h$ and in $h$, we find 
        \[
        DF(\eta) [\im{1} h] = F(\im{1}h) = -x_{\vec{e}_1} + x_{\vec{0}}\, \im{1} = \im{1} F(h) = \im{1} DF(\eta)[h],
        \]
    so $F$ is $\im{1}$-holomorphic. However, $F (\im{2}) = 0$ while $\im{2} F (1) = \im{2} \neq 0$, so $DF$ is not $\mM \mC(n)$-linear and $F \notin \hol(U)$. For an arbitrary direction $\im{} = \mathbf i^{\vec b}$, complete the odd vector $\vec b$ to a basis $\vec t_1 = \vec b , \vec t_2 , \ldots , \vec t_n$ of $\mathbb F_2^n$ consisting of odd vectors (if $j_0 \in \operatorname{supp} (\vec b)$, the standard basis vectors $\vec e_j$, $j \neq j_0$, complete $\vec b$ to such a basis).
    
      The assignment $\psi (\im{j}) := \mathbf i^{\vec t_j}$, $1 \leq j \leq n$, extends to a real-algebra endomorphism of $\mM \mC(n)$ by the discussion preceding Lemma~\ref{L:SignVectors}, and $\psi$ is bijective because the matrix $T$ with columns $\vec t_1 , \ldots , \vec t_n$ is invertible over $\mathbb F_2$.
    
      Since $D (\psi \circ F \circ \psi^{-1}) = \psi \circ DF \circ \psi^{-1}$ on $\psi (U)$ and $\psi^{-1} \circ \cL_{\im{}} \circ \psi = \cL_{\psi^{-1} (\im{})} = \cL_{\im{1}}$ on $\mM \mC (n)$, the map $F \mapsto \psi \circ F \circ \psi^{-1}$ sends $\hol_{\im{1}} (\psi^{-1} (U))$ onto $\hol_{\im{}} (U)$ and preserves multicomplex holomorphy in both directions, so the strictness transfers from $\im{1}$ to $\im{}$.
    \end{proof}

    \begin{remarks}
    (1) One could try to attach a different system of equations to an $\mI(n)$-preserving involution $\sigma$ by using $\sigma$ as a change of generators rather than as a conjugation: the units $J_k := \sigma (\im{k})$, $1 \leq k \leq n$, again commute, square to $-1$, and generate $\mM \mC(n)$, and one may write the analogue of the system \eqref{Eq:MCCRSystem} with respect to them. However, as observed after Proposition~\ref{P:MCHoloSystem}, the resulting system characterizes multicomplex holomorphy again, and no new classes of functions arise in this way. This is why the conjugation viewpoint of Lemma~\ref{L:Conjugation}, which requires the $e$-admissibility of the involution, is the appropriate one.

    (2) The definition of an $e$-admissible pair requires $\ims{} = -1$. If instead $\jm{} \in \mI(n)$, with $n \geq 2$, satisfies $\jm{}^2 = 1$, $\jm{} \neq \pm 1$, and $\sigma (\jm{} ) = - \jm{}$, then the involution is called \emph{hyperbolic-admissible} (abbreviated $h$-admissible). In this context, the proof of Lemma~\ref{L:FixedDecomposition} may be adapted and yields the decomposition $\mM \mC(n) = \cB_\sigma \oplus \jm{}\, \cB_\sigma$, and the analogue of \eqref{Eq:SigmaCR} produces first-order systems of \emph{hyperbolic} type, since $\cL_{\jm{}}^2 = + \mathrm{Id}$. See \cite{Catoni2008} for more on the subject of hyperbolic Cauchy--Riemann equations and on general commutative hypercomplex numbers and their connections to the geometry of Minkowski space-time.    \end{remarks}

The next theorem shows that to describe multicomplex holomorphy, we may use $\im{}$-holomorphy for a subclass of imaginary units $\im{}$, provided their sign vectors $\vec{b}$ span $\mathbb{F}_2^n$. Recall that an odd vector $\vec{b} \in \mathbb{F}_2^n$ is characterized by the condition $\vec{e}^\top \vec{b} \equiv 1 \pmod 2$. 

\begin{theorem}\label{T:EquivalenceSpanSWithHolomorphy}
    Let $U \subseteq \mM \mC (n)$ be open with $U \neq \varnothing$ and let $S \subseteq \mathbb{F}_2^n$ be a subset of odd vectors. Then the following statements are equivalent. 
    \begin{enumerate}[label=(\roman*)]
        \item $\displaystyle\hol (U) = \bigcap_{\vec{s} \in S} \hol_{\im{}^{\vec{s}}} (U)$.
        \item $\mathrm{span} \; S = \mathbb{F}_2^n$. 
    \end{enumerate}
\end{theorem}
\begin{proof}
    We first show that (i) implies (ii) by contrapositive. Let $\mathrm{span} \, S \neq \mathbb{F}_2^n$. Then, there is a $k \in \{ 1, 2, \ldots , n \}$ such that $\vec{e}_k \not\in \mathrm{span} \; S$. We will construct a function $F$ such that $F \in \cap_{\vec{s} \in S} \hol_{\im{}^{\vec{s}}} (U)$, but $F \not\in \hol (U)$. 
    
    Consider $\{\vec{s}_1, \vec{s}_2, \ldots , \vec{s}_m \}$ a basis for $\mathrm{span} \, S$ and complete the set $\{ \vec{s}_1 , \vec{s}_2 , \ldots , \vec{s}_m, \vec{e}_k \}$ as a basis $B$ for $\mathbb{F}_2^n$. Let $H$ be the spanning set of the element in $B \backslash \{ \vec{e}_k \}$. Now set $\mathcal{H} := \mathrm{span}_{\mathbb{R}} \; \{ \im{}^{\vec{c}} \, : \, \vec{c} \in H \}$. Since $\im{}^{\vec{e}_k} = \im{k}$, it is easily seen that 
    $$ 
    \im{k} \mathcal{H} = \operatorname{span} \{ \im{}^{\vec{c}} \, : \, \vec{c} \not\in H \}
    $$
    and that $\mM \mC (n) = \mathcal{H} \oplus \im{k} \mathcal{H}$. Define $F : U \ra \mM \mC (n)$ as the projection onto $\mathcal{H}$ along $\im{k} \mathcal{H}$, that is
        $$  
            F (\eta) = F (u + \im{k} v) := u,
        $$
    where $u, v \in \mathcal{H}$. Then $F$ is real-differentiable with $DF (\eta) = F$, for any $\eta \in U$. For any $\vec{s} \in S$, it is $\im{}^{\vec{s}}$-holomorphic on $U$. Indeed, for $\vec{s} \in S$, for $\eta \in U$, and for $h = u + \im{k} v$ with $u, v \in \mathcal{H}$, we have
        $$
            \im{}^{\vec{s}} DF (\eta) [h] = \im{}^{\vec{s}} F [h] = \im{}^{\vec{s}} u = F [\im{}^{\vec{s}} h] = DF (\eta ) [\im{}^{\vec{s}} h].
        $$
    However, for $h = \im{k} v$ with $0\neq v \in \mathcal{H}$, we have $DF (\eta ) [h] = F [\im{k} v] = 0$, but $\im{k} DF(\eta) [v] = \im{k} v$, hence $F$ is not holomorphic. Condition (i) does not hold. 

    We now prove (ii) implies (i). Suppose that $\mathrm{span} \, S = \mathbb{F}_2^n$. From Proposition \ref{P:HoloInclusion}, $\hol (U) \subseteq \cap_{\vec{s} \in S} \hol_{\im{}^{\vec{s}}} (U)$. We prove the reverse inclusion. Let $F \in \cap_{\vec{s} \in S} \hol_{\im{}^{\vec{s}}} (U)$. Fix $k \in \{ 1, 2, \ldots , n \}$. Since $\vec{e}_k \in \mathrm{span} \; S$, we can write $\vec{e}_k = \vec{s}_1 \oplus \vec{s}_2 \oplus \cdots \oplus \vec{s}_m$, with $m \leq n$. From the rule \eqref{Eq:MonomialMult} for multiplying monomials, we immediately see that
        $$
            \mathcal{L}_{\im{}^{\vec{e}_k}} \circ DF (\eta ) = DF (\eta ) \circ \mathcal{L}_{\im{}^{\vec{e}_k}} .
        $$
    Since $k$ was arbitrary, the last commuting property is true for every standard basis vector of $\mathbb{F}_2^n$. Since all the generators commute, one has
$\cL_{\im{}^{\vec b}} = \cL_{\im{1}}^{\,b_1}\cdots\cL_{\im{n}}^{\,b_n}$ for every $\vec b \in \mathbb{F}_2^{n}$. Hence $DF(\eta)$ commutes with $\cL_{\im{}^{\vec b}}$ for all $\vec b$, and by $\mathbb{R}$-linearity with $\cL_h$, the multiplication operator by $h$, for every $h \in \mM \mC(n)$. Applying this to the multicomplex number $1$ gives 
    \[
    DF(\eta)[h] = DF(\eta)\bigl[\cL_h(1)\bigr]
    = \cL_h\bigl(DF(\eta)[1]\bigr) = h\,\lambda(\eta),
    \]
with $\lambda (\eta ) := DF (\eta ) [1]$. So $DF(\eta)$ is multiplication by $\lambda(\eta)$ and therefore $\mM \mC(n)$-linear.
    Thus $F \in \hol(U)$.
\end{proof}

Since $\{ \vec{e}_1, \vec{e}_2, \ldots , \vec{e}_n \}$ is a basis for $\mathbb{F}_2^n$, we obtain the following corollary, stating that the generators are sufficient to characterize multicomplex holomorphy in terms of $\im{}$-holomorphy.

\begin{corollary}\label{C:hol-intersection}
Let $U \subseteq \mM \mC(n)$ be open. A function $F \colon U \to \mM \mC(n)$ of class $C^1$
is multicomplex holomorphic if and only if it is $\im{k}$-holomorphic for each of the
generators $\im{1}, \dots, \im{n}$. Equivalently,
\[
\hol(U) \;=\; \bigcap_{k=1}^{n} \hol_{\im{k}}(U).
\]
\end{corollary}

\begin{remark}
The intersection ranges over the $n$ generators only, although $\mI (n)$ contains
$2^{n-1}$ units squaring to $-1$, namely the monomials $\im{}^{\vec b}$ with
$\vec e^{\top}\vec b \equiv 1 \pmod 2$. The other $2^{n-1}-n$ of them are redundant: for any such
unit, $\cL_{\im{}^{\vec b}} = \cL_{\im{1}}^{\,b_1}\cdots\cL_{\im{n}}^{\,b_n}$
commutes with $DF(\eta)$ as soon as the generators do, so
$\bigcap_{k=1}^{n}\hol_{\im{k}}(U) \subseteq \hol_{\im{}^{\vec b}}(U)$. 
\end{remark}

\subsection{The bicomplex case}\label{SS:Bicomplex}

We now describe the situation completely for $n = 2$. The set of $\mI(2)$-preserving involutions of $\mM \mC(2)$ consists of the identity, the three conjugations\footnote{In \cite{Bicomplex}, the principal conjugates are denoted respectively by $\overline{\phantom{z}}$, $\dagger$, and $\ast$.}
    \begin{align*}
    \sigma_1 : \ \im{1} \mapsto - \im{1} , \ \im{2} \mapsto \im{2} , \qquad
    \sigma_2 : \ \im{1} \mapsto \im{1} , \ \im{2} \mapsto - \im{2} , \qquad
    \sigma_3 : \ \im{1} \mapsto - \im{1} , \ \im{2} \mapsto - \im{2},
    \end{align*}
and the two swaps\footnote{The unsigned swap appears, for example, in \cite{PerezRegaladoQuiroga2018}.}
    \begin{align*}
    \omega : \ \im{1} \mapsto \im{2} , \ \im{2} \mapsto \im{1} , \qquad \widetilde{\omega} : \ \im{1} \mapsto -\im{2} , \ \im{2} \mapsto -\im{1}.
    \end{align*}
The admissible directions are $\im{1}$ and $\im{2}$, since the remaining nontrivial unit $\im{1}\im{2}$ squares to $+1$. Among the six involutions, exactly three are $e$-admissible, namely $\sigma_1, \sigma_2, \sigma_3$, and they form exactly four $e$-admissible pairs,
    \begin{align*}
    (\sigma_1 , \im{1}) , \qquad (\sigma_2 , \im{2}) , \qquad (\sigma_3 , \im{1}) , \qquad (\sigma_3 , \im{2}).
    \end{align*}

The fixed algebras of the three conjugations are
    \begin{align*}
    \cB_{\sigma_1} = \spn_{\mR} \{ 1 , \im{2} \}, \qquad \cB_{\sigma_2} = \spn_{\mR} \{ 1 , \im{1} \}, \qquad \cB_{\sigma_3} = \spn_{\mR} \{ 1 , \im{1}\im{2} \}.
    \end{align*}
Write $\eta = x_0 + x_1 \im{1} + x_2 \im{2} + x_{12}\, \im{1}\im{2}$ for the canonical coordinates on $\mM \mC(2)$. For the pair $(\sigma_1 , \im{1})$, the basis $\beta_0 = 1$, $\beta_1 = \im{2}$ of $\cB_{\sigma_1}$ gives $u_0 = x_0$, $v_0 = x_1$, $u_1 = x_2$, $v_1 = x_{12}$, and the system \eqref{Eq:SigmaCRCoordinates} reads
    \begin{equation}\label{Eq:BicomplexSystem1}
    \frac{\partial F}{\partial x_1} = \im{1} \frac{\partial F}{\partial x_0} , \qquad \frac{\partial F}{\partial x_{12}} = \im{1} \frac{\partial F}{\partial x_2}.
    \end{equation}
For the pair $(\sigma_3 , \im{1})$, the basis $\beta_0 = 1$, $\beta_1 = \im{1}\im{2}$ of $\cB_{\sigma_3}$ gives $u_0 = x_0$, $v_0 = x_1$, $u_1 = x_{12}$, $v_1 = - x_2$, and the system \eqref{Eq:SigmaCRCoordinates} reads
    \begin{equation}\label{Eq:BicomplexSystem3}
    \frac{\partial F}{\partial x_1} = \im{1} \frac{\partial F}{\partial x_0} , \qquad \frac{\partial F}{\partial x_{2}} = -\im{1} \frac{\partial F}{\partial x_{12}}.
    \end{equation}
Although the two systems look different, multiplying the second equation of \eqref{Eq:BicomplexSystem1} by $\im{1}$ shows that they have exactly the same solutions, as predicted by Theorem~\ref{T:SigmaIndependence}: both characterize the class $\hol_{\im{1}} (U)$, which consists of the holomorphic functions of the two complex variables $z_0 = x_0 + \im{1} x_1$ and $z_1 = x_2 + \im{1} x_{12}$, with values in the complex vector space $(\mM \mC(2) , \cL_{\im{1}})$. The two pairs directed by $\im{2}$ lead similarly, in the complex variables $w_0 = x_0 + \im{2} x_2$ and $w_1 = x_1 + \im{2} x_{12}$, to the class $\hol_{\im{2}} (U)$.

    \begin{theorem}\label{T:BicomplexComplete}
    Let $U \subseteq \mM \mC(2)$ be a nonempty open set and let $F : U \ra \mM \mC(2)$ be of class $C^1$. Then the following statements hold.
        \begin{enumerate}[label=(\alph*)]
        \item The four $e$-admissible pairs of $\mM \mC(2)$ give rise to exactly two classes of functions, namely $\hol_{\im{1}} (U)$ and $\hol_{\im{2}} (U)$.
        \item $F$ is bicomplex holomorphic if and only if $F \in \hol_{\im{1}} (U) \cap \hol_{\im{2}} (U)$.
        \item The inclusions $\hol(U) \subseteq \hol_{\im{1}} (U)$ and $\hol(U) \subseteq \hol_{\im{2}} (U)$ are strict, and $\hol_{\im{1}} (U) \neq \hol_{\im{2}} (U)$.
        \end{enumerate}
    \end{theorem}

    \begin{proof}
    Part (a) follows from Theorem~\ref{T:SigmaIndependence} and the list of pairs above.

    (b) If $F$ is bicomplex holomorphic, then $F \in \hol_{\im{1}} (U) \cap \hol_{\im{2}} (U)$ by Proposition~\ref{P:HoloInclusion}. Conversely, suppose that $DF (\eta )$ commutes with both $\cL_{\im{1}}$ and $\cL_{\im{2}}$. Then $DF (\eta )$ also commutes with $\cL_{\im{1}} \circ \cL_{\im{2}} = \cL_{\im{1}\im{2}}$, and hence, for $h = c_0 + c_1 \im{1} + c_2 \im{2} + c_{12}\, \im{1}\im{2}$ with real coefficients,
    \begin{align*}
    DF (\eta ) [h] &= c_0\, DF(\eta)[1] + c_1 \cL_{\im{1}} DF(\eta)[1] + c_2 \cL_{\im{2}} DF(\eta)[1] + c_{12} \cL_{\im{1}\im{2}} DF(\eta)[1] \\
    &= h\, DF(\eta)[1].
    \end{align*}
    Thus $DF (\eta )$ is $\mM \mC(2)$-linear and $F$ is bicomplex holomorphic.

    (c) Consider the real-linear function $F (\eta ) := x_0 + x_1 \im{1}$, for which $DF (\eta ) = F$ for every $\eta$. On the one hand, $F (\im{1} \eta ) = - x_1 + x_0\, \im{1} = \im{1} F (\eta )$, so $F \in \hol_{\im{1}} (U)$. On the other hand, $F (\im{2} \cdot 1 ) = 0$ while $\im{2}\, F (1) = \im{2} \neq 0$, so $F \notin \hol_{\im{2}} (U)$, and in particular $F$ is not bicomplex holomorphic. Exchanging the roles of $\im{1}$ and $\im{2}$ completes the proof.
    \end{proof}

In the bicomplex setting, every Cauchy--Riemann system generated by an $e$-admissible involution of $\mM \mC(2)$ characterizes either the $\im{1}$-holomorphic or the $\im{2}$-holomorphic functions, and bicomplex holomorphy is exactly the conjunction of the two.

\subsection{A tricomplex example}\label{SS:Tricomplex}

We conclude with an example in $\mM \mC(3)$ showing that, in contrast with the bicomplex case, the real form $\cB_\sigma$ selected by an $e$-admissible involution need not be spanned by elements of $\pm \mI(n)$. Consider the involution $\sigma$ of $\mM \mC(3)$ determined by
    \begin{align*}
    \sigma (\im{1}) = - \im{1} , \qquad \sigma (\im{2}) = -\im{2} , \qquad \sigma (\im{3}) = \im{1}\im{2}\im{3} .
    \end{align*}
Its exponent matrix $M$ has columns $\vec e_1$, $\vec e_2$, $(1,1,1)^\top$, and $\ker (M - I) = \{ \vec b \in \mathbb F_2^3\, :\, b_3 = 0 \}$, whose odd vectors are $\vec e_1$ and $\vec e_2$. Since $\sigma (\im{1}) = -\im{1}$ and $\sigma (\im{2}) = - \im{2}$, both $(\sigma , \im{1})$ and $(\sigma , \im{2})$ are $e$-admissible pairs. We work with $(\sigma , \im{1})$.

A direct computation shows that the four elements
    \begin{align*}
    \beta_0 = 1 , \qquad \beta_1 = \im{1}\im{2} , \qquad \beta_2 = \im{3} + \im{1}\im{2}\im{3} , \qquad \beta_3 = \im{1}\im{3} + \im{2}\im{3}
    \end{align*}
are fixed by $\sigma$: for instance, $\sigma (\im{1}\im{3}) = (-\im{1})(\im{1}\im{2}\im{3}) = \im{2}\im{3}$ and $\sigma (\im{2}\im{3}) = (-\im{2})(\im{1}\im{2}\im{3}) = \im{1}\im{3}$, so $\beta_3$ is fixed. They are linearly independent over $\mR$ and, since $\dim_{\mR} \cB_\sigma = 2^{3-1} = 4$ by Lemma~\ref{L:FixedDecomposition}, they form a basis of $\cB_\sigma$. In particular, $\cB_\sigma$ is \emph{not} spanned by signed unit monomials: the involution $\sigma$ glues the units $\im{3}$ and $\im{1}\im{2}\im{3}$, as well as the units $\im{1}\im{3}$ and $\im{2}\im{3}$. This phenomenon cannot occur for $n \leq 2$, as the fixed algebras displayed in Section~\ref{SS:Bicomplex} show.

Write
    \begin{align*}
    \eta = x_0 + x_1 \im{1} + x_2 \im{2} + x_3 \im{3} + x_{12}\, \im{1}\im{2} + x_{13}\, \im{1}\im{3} + x_{23}\, \im{2}\im{3} + x_{123}\, \im{1}\im{2}\im{3}
    \end{align*}
for the canonical coordinates on $\mM \mC(3)$. Expanding $\eta = \sum_{\ell = 0}^{3} (u_\ell + \im{1} v_\ell )\, \beta_\ell$ and comparing the coefficients yields the dictionary
    \begin{align*}
    &u_0 = x_0 , \qquad v_0 = x_1 , \qquad u_1 = x_{12} , \qquad v_1 = - x_2 , \\
    &u_2 = \frac{x_3 + x_{123}}{2} , \qquad v_2 = \frac{x_{13} - x_{23}}{2} , \qquad u_3 = \frac{x_{13} + x_{23}}{2} , \qquad v_3 = \frac{x_{123} - x_3}{2},
    \end{align*}
and the $(\sigma , \im{1})$-Cauchy--Riemann equations $\partial F / \partial v_\ell = \im{1}\, \partial F / \partial u_\ell$, $0 \leq \ell \leq 3$, become, in the canonical coordinates,
    \begin{equation}\label{Eq:TricomplexSystem}
    \begin{aligned}
    \frac{\partial F}{\partial x_1} &= \im{1} \frac{\partial F}{\partial x_0} , &
    - \frac{\partial F}{\partial x_2} &= \im{1} \frac{\partial F}{\partial x_{12}} , \\
    \frac{\partial F}{\partial x_{13}} - \frac{\partial F}{\partial x_{23}} &= \im{1} \Big( \frac{\partial F}{\partial x_3} + \frac{\partial F}{\partial x_{123}} \Big) , &
    - \frac{\partial F}{\partial x_3} + \frac{\partial F}{\partial x_{123}} &= \im{1} \Big( \frac{\partial F}{\partial x_{13}} + \frac{\partial F}{\partial x_{23}} \Big).
    \end{aligned}
    \end{equation}
The last two equations genuinely mix the four coordinates $x_3 , x_{13}, x_{23}, x_{123}$, reflecting the gluing performed by $\sigma$.

For comparison, the diagonal conjugation $\tau : \im{k} \mapsto - \im{k}$, $1 \leq k \leq 3$, also forms an $e$-admissible pair with $\im{1}$, and its fixed algebra $\cB_{\tau} = \spn_{\mR} \{ 1 , \im{1}\im{2} , \im{1}\im{3} , \im{2}\im{3} \}$ is spanned by unit monomials. The corresponding system is
    \begin{align*}
    \frac{\partial F}{\partial x_1} = \im{1} \frac{\partial F}{\partial x_0} , \qquad
    - \frac{\partial F}{\partial x_2} = \im{1} \frac{\partial F}{\partial x_{12}} , \qquad
    - \frac{\partial F}{\partial x_3} = \im{1} \frac{\partial F}{\partial x_{13}} , \qquad
    \frac{\partial F}{\partial x_{123}} = \im{1} \frac{\partial F}{\partial x_{23}}.
    \end{align*}
By Theorem~\ref{T:SigmaIndependence}, this system and the system \eqref{Eq:TricomplexSystem} have exactly the same solutions, namely the class $\hol_{\im{1}} (U)$ of holomorphic functions of four complex variables with respect to the complex structure $\cL_{\im{1}}$. One can also verify the equivalence of the two systems directly by substitution. The exotic involution $\sigma$ does not change the function theory, but it changes the distinguished real form and the resulting real presentation of the Cauchy--Riemann equations.

\subsection{Anti-holomorphic and twisted classes}\label{SS:Beyond}

We present two families of functions connected to the classes $\hol_{\im{}} (U)$. The first family consists of the \emph{anti-holomorphic} classes, obtained by reversing the sign in the Cauchy--Riemann equations \eqref{Eq:SigmaCR}. They turn out to be conjugates of the classes $\hol_{\im{}} (U)$, and it is here that the involutions finally play an operational role as the maps implementing the conjugation. The second family consists of the \emph{twisted} classes, in which the differential intertwines two different complex structures. We show that every such class is an automorphic image of an $\im{}$-holomorphy class.
We begin with the anti-holomorphic classes.

    \begin{definition}\label{D:AntiCR}
    Let $(\sigma,\im{})$ be an $e$-admissible pair, let $U\subseteq\mM \mC(n)$ be open, and let $F:U\to\mM \mC(n)$ be of class $C^1$. We say that $F$ satisfies the \emph{$(\sigma , \im{})$-anti-Cauchy--Riemann equations} on $U$ if
        \begin{equation}\label{Eq:AntiCR}
        D F (\eta ) [ \im{} \beta ] = - \im{}\, DF (\eta ) [\beta] , \qquad \eta \in U , \ \beta \in \cB_\sigma.
        \end{equation}
    \end{definition}

In the coordinates of Section~\ref{SS:GeneralCR}, condition \eqref{Eq:AntiCR} reads $\partial F / \partial v_\ell = - \im{}\, \partial F / \partial u_\ell$ for $0 \leq \ell \leq m - 1$, that is, $\partial_\ell^{\, \sigma , \im{}} F = 0$ for the unbarred Wirtinger-type operators
    \begin{align*}
    \partial_\ell^{\, \sigma , \im{}} := \frac{1}{2} \Big( \frac{\partial}{\partial u_\ell} - \im{} \frac{\partial}{\partial v_\ell} \Big) , \qquad 0 \leq \ell \leq m - 1.
    \end{align*}

    \begin{proposition}\label{P:AntiHolo}
    Let $(\sigma,\im{})$ be an $e$-admissible pair, let $U\subseteq\mM \mC(n)$ be open, and let $F:U\to\mM \mC(n)$ be of class $C^1$. The following statements are equivalent.
        \begin{enumerate}[label=(\roman*)]
        \item $F$ satisfies the $(\sigma , \im{})$-anti-Cauchy--Riemann equations on $U$;
        \item $DF (\eta ) \circ \cL_{\im{}} = - \cL_{\im{}} \circ DF (\eta )$ for every $\eta \in U$;
        \item $\sigma \circ F$ satisfies the $(\sigma , \im{})$-Cauchy--Riemann equations on $U$;
        \item $F \circ \sigma$ satisfies the $(\sigma , \im{})$-Cauchy--Riemann equations on $\sigma (U)$.
        \end{enumerate}
    In particular, the class of solutions, denoted by $\overline{\hol}_{\im{}} (U)$, depends only on the unit $\im{}$, and
        \begin{align*}
        \overline{\hol}_{\im{}} (U) = \{ \sigma \circ G\, :\, G \in \hol_{\im{}} (U) \} = \{ G \circ \sigma\, :\, G \in \hol_{\im{}} (\sigma (U)) \}
        \end{align*}
    for every involution $\sigma$ such that $(\sigma,\im{})$ is an $e$-admissible pair.
    \end{proposition}

    \begin{proof}
    (i) $\Leftrightarrow$ (ii). As in the proof of Theorem~\ref{T:SigmaIndependence}, write $h = u + \im{} v$ with $u , v \in \cB_\sigma$, so that $\im{} h = \im{} u - v$. If (i) holds, then
    \begin{align*}
    DF (\eta ) [\im{} h] &= DF (\eta )[\im{} u] - D F (\eta ) [v] = -\im{}\, DF (\eta )[u] - DF (\eta ) [v] , \\
    -\im{}\, DF (\eta ) [h] &= -\im{}\, DF (\eta) [u] - \im{}\, DF (\eta ) [\im{} v] = -\im{}\, DF (\eta )[u] - DF (\eta )[v],
    \end{align*}
    and the two right-hand sides agree, which gives (ii). The converse is immediate.

    (ii) $\Leftrightarrow$ (iii). Since $\sigma$ is linear, $D (\sigma \circ F) (\eta ) = \sigma \circ DF (\eta )$, and Lemma~\ref{L:Conjugation} gives $\sigma \circ \cL_{\im{}} = - \cL_{\im{}} \circ \sigma$. Hence, since $DF (\eta )$ is real-linear,
    \[
    D (\sigma \circ F)(\eta ) \circ \cL_{\im{}} = \sigma \circ DF (\eta ) \circ \cL_{\im{}}
    \qquad \text{and} \qquad
    \cL_{\im{}} \circ D (\sigma \circ F) (\eta ) = - \sigma \circ \cL_{\im{}} \circ DF (\eta ),
    \]
    so $\sigma \circ F$ satisfies condition (ii) of Theorem~\ref{T:SigmaIndependence} if and only if $DF (\eta ) \circ \cL_{\im{}} = - \cL_{\im{}} \circ DF (\eta )$, since $\sigma$ is invertible.

    (ii) $\Leftrightarrow$ (iv). Let $\zeta\in\sigma(U)$. Since $\sigma$ is linear and involutive,
    \[
    D(F\circ\sigma)(\zeta)=DF(\sigma(\zeta))\circ\sigma.
    \]
    Using $\sigma\circ\cL_{\im{}}=-\cL_{\im{}}\circ\sigma$, we obtain
    \[
    \begin{aligned}
    D(F\circ\sigma)(\zeta)\circ\cL_{\im{}}
    &=-DF(\sigma(\zeta))\circ\cL_{\im{}}\circ\sigma,\\
    \cL_{\im{}}\circ D(F\circ\sigma)(\zeta)
    &=\cL_{\im{}}\circ DF(\sigma(\zeta))\circ\sigma.
    \end{aligned}
    \]
    Since $\sigma$ is bijective, these expressions are equal for every $\zeta\in\sigma(U)$ if and only if condition~(ii) holds throughout $U$.

    The description of $\overline{\hol}_{\im{}} (U)$ follows from (iii) and (iv) applied to $F = \sigma \circ G$ and $F = G \circ \sigma$, using $\sigma \circ \sigma = \mathrm{Id}$, and the independence of $\sigma$ is clear from (ii).
    \end{proof}

    \begin{remark}
    When $n=1$, Proposition~\ref{P:AntiHolo} recovers the classical anti-holomorphic functions: the functions $\overline{G(z)}$ and $G(\overline{z})$ with $G$ holomorphic. Moreover, for any $n \geq 1$, $\hol_{\im{}} (U) \cap \overline{\hol}_{\im{}} (U)$ consists of the locally constant functions: if $DF (\eta )$ both commutes and anticommutes with $\cL_{\im{}}$, then $2\, DF (\eta ) \circ \cL_{\im{}} = 0$, and $DF (\eta ) = 0$ since $\cL_{\im{}}$ is invertible.
    \end{remark}

The next proposition gives the counterpart of Corollary~\ref{C:hol-intersection} for anti-holomorphic functions.
\begin{proposition}\label{P:AntiIntersection}
Let $U\subseteq\mM \mC(n)$ be open, and let
$\tau : \mM \mC(n) \to \mM \mC(n)$ be the involution determined by
\[
\tau(\im{k})=-\im{k},
\qquad 1\leq k\leq n.
\]
Let $F:U\to\mM \mC(n)$ be a function of class $C^1$. Then the following
statements are equivalent.
\begin{enumerate}
    \item[\textnormal{(i)}]
    $F\in\displaystyle\bigcap_{k=1}^{n}
    \overline{\hol}_{\im{k}}(U)$;

    \item[\textnormal{(ii)}]
    $\tau\circ F\in\hol(U)$;

    \item[\textnormal{(iii)}]
    there exists a function $\mu:U\to\mM \mC(n)$ such that
    \[
    DF(\eta)[h]=\mu(\eta)\tau(h),
    \qquad
    \eta\in U,\quad h\in\mM \mC(n).
    \]
\end{enumerate}
Consequently,
\[
\bigcap_{k=1}^{n}\overline{\hol}_{\im{k}}(U)
=
\left\{
\tau\circ G : G\in\hol(U)
\right\}.
\]
Equivalently,
\[
\bigcap_{k=1}^{n}\overline{\hol}_{\im{k}}(U)
=
\left\{
G\circ\tau :
G\in\hol\bigl(\tau(U)\bigr)
\right\}.
\]
\end{proposition}

\begin{proof}
The involution $\tau$ satisfies $\tau(\im{k})=-\im{k}$ for every
$1\leq k\leq n$. Hence $(\tau,\im{k})$ is an $e$-admissible pair for each
$k$. By Proposition~\ref{P:AntiHolo},
\[
F\in\overline{\hol}_{\im{k}}(U)
\quad\Longleftrightarrow\quad
\tau\circ F\in\hol_{\im{k}}(U).
\]
It follows that
\[
F\in\bigcap_{k=1}^{n}\overline{\hol}_{\im{k}}(U)
\quad\Longleftrightarrow\quad
\tau\circ F\in\bigcap_{k=1}^{n}\hol_{\im{k}}(U).
\]
By Corollary~\ref{C:hol-intersection},
\[
\bigcap_{k=1}^{n}\hol_{\im{k}}(U)
=
\hol(U).
\]
This proves the equivalence between \textnormal{(i)} and
\textnormal{(ii)}.

Suppose that \textnormal{(ii)} holds and set $G:=\tau\circ F$. Since
$G\in\hol(U)$, for every $\eta\in U$ there exists
$\lambda(\eta)\in\mM \mC(n)$ such that
\[
DG(\eta)[h]=\lambda(\eta)h,
\qquad
h\in\mM \mC(n).
\]
Since $\tau$ is real-linear, we have
\[
DG(\eta)=\tau\circ DF(\eta).
\]
Applying $\tau$ and using $\tau^2=\operatorname{Id}$, we obtain
\[
DF(\eta)[h]
=
\tau\bigl(\lambda(\eta)h\bigr)
=
\tau\bigl(\lambda(\eta)\bigr)\tau(h).
\]
Thus \textnormal{(iii)} holds with
$\mu(\eta):=\tau\bigl(\lambda(\eta)\bigr)$.

Conversely, suppose that \textnormal{(iii)} holds. Then
\[
\begin{aligned}
D(\tau\circ F)(\eta)[h]
&=
\tau\bigl(DF(\eta)[h]\bigr) =
\tau\bigl(\mu(\eta)\tau(h)\bigr) =
\tau\bigl(\mu(\eta)\bigr)h.
\end{aligned}
\]
Therefore, $D(\tau\circ F)(\eta)$ is $\mM \mC(n)$-linear for every
$\eta\in U$, and hence $\tau\circ F\in\hol(U)$. This proves the
equivalence between \textnormal{(ii)} and \textnormal{(iii)}.

Since $\tau^2=\operatorname{Id}$, condition \textnormal{(ii)} is
equivalent to the existence of $G\in\hol(U)$ such that
$F=\tau\circ G$. This gives the first description of the intersection.
The second description follows from the precomposition characterization
in Proposition~\ref{P:AntiHolo}.
\end{proof}

In view of Proposition~\ref{P:AntiIntersection}, it is natural to define the class
of multicomplex anti-holomorphic functions on $U$ by
\[
\overline{\hol}(U)
:=
\left\{
\tau\circ G : G\in\hol(U)
\right\}.
\]
Proposition~\ref{P:AntiIntersection} then gives the anti-holomorphic counterpart
of Corollary~\ref{C:hol-intersection}:
\[
\overline{\hol}(U)
=
\bigcap_{k=1}^{n}\overline{\hol}_{\im{k}}(U).
\]
In particular, a function is multicomplex anti-holomorphic precisely
when its differential is $\tau$-conjugate $\mM \mC(n)$-linear, in the sense that
\[
DF(\eta)[h]=\mu(\eta)\tau(h)
\]
for some $\mu(\eta)\in\mM \mC(n)$.

We end this section with a result about twisted classes: one may require the differential of $F$ to intertwine two different complex structures. The next proposition shows that this apparent generalization reduces to $\im{}$-holomorphy.

    \begin{proposition}\label{P:Twisted}
    Let $\im{} , \im{}' \in \mI(n)$ with $\ims{} = (\im{}')^2 = -1$, let $\varepsilon \in \{ -1 , 1 \}$, and let $U \subseteq \mM \mC(n)$ be an open set. There exists an $\mI(n)$-preserving automorphism $\psi$ of $\mM \mC(n)$ with $\psi (\im{}) = \varepsilon\, \im{}'$, and for any such $\psi$,
        \begin{align*}
        \{ F \in C^1 (U)\, :\, DF (\eta ) \circ \cL_{\im{}} = \varepsilon\, \cL_{\im{}'} \circ DF (\eta ) , \ \eta \in U \} = \{ \psi \circ G\, :\, G \in \hol_{\im{}} (U) \}.
        \end{align*}
    In particular, every such twisted class consists of real-analytic functions and is obtained from an $\im{}$-holomorphy class by post-composition with a single fixed $\mI(n)$-preserving automorphism.
    \end{proposition}

    \begin{proof}
    For the existence of $\psi$: when $\varepsilon = 1$, an automorphism with $\psi (\im{}) = \im{}'$ was constructed in the proof of Proposition~\ref{P:HoloInclusion}; when $\varepsilon = -1$, compose it with the involution negating one generator $\im{j_0}$ with $j_0$ in the support of $\im{}'$, which maps $\im{}'$ to $-\im{}'$ and preserves $\mI(n)$.

    Since $\psi$ is a $\mathbb R$-algebra automorphism, $\psi \circ \cL_{\im{}} = \cL_{\psi (\im{})} \circ \psi = \varepsilon\, \cL_{\im{}'} \circ \psi$. If $F = \psi \circ G$ with $G \in \hol_{\im{}} (U)$, then for $\eta \in U$, $DF (\eta) = \psi \circ DG (\eta )$ and
    \[
    DF(\eta)\circ\cL_{\im{}}=\psi\circ DG(\eta)\circ\cL_{\im{}}=\psi\circ\cL_{\im{}}\circ DG(\eta) = \varepsilon\, \cL_{\im{}'} \circ \psi \circ DG (\eta ) = \varepsilon\, \cL_{\im{}'} \circ DF (\eta ).
    \]
    Conversely, if $F$ belongs to the twisted class, set $G := \psi^{-1} \circ F$. From $\psi (\im{}) = \varepsilon \im{}'$ we get $\psi^{-1} (\im{}') = \varepsilon \im{}$, hence $\psi^{-1} \circ \cL_{\im{}'} = \varepsilon\, \cL_{\im{}} \circ \psi^{-1}$ and for $\eta \in U$ 
    \[
    DG (\eta ) \circ \cL_{\im{}} = \psi^{-1} \circ DF (\eta ) \circ \cL_{\im{}} = \varepsilon\, \psi^{-1} \circ \cL_{\im{}'} \circ DF (\eta ) = \varepsilon^2\, \cL_{\im{}} \circ DG (\eta ),
    \]
    so $G \in \hol_{\im{}} (U)$. Hence $F = \psi \circ G$ with $G \in \hol_{\im{}} (U)$. 
    \end{proof}

Taking $\im{}' = \im{}$ and $\varepsilon = -1$ in Proposition~\ref{P:Twisted} recovers the anti-holomorphic class. Taking $n = 2$, $\im{} = \im{1}$, $\im{}' = \im{2}$, $\varepsilon = 1$, and $\psi = \omega$ shows that the swap of Section~\ref{SS:Bicomplex} intertwines the two bicomplex classes. 

\subsection{Harmonic functions}

Recall that, alongside the operators $\overline{\partial}_\ell^{\, \sigma , \im{}}$, we introduced their companions
    \begin{equation}\label{Eq:UnbarredWirtinger}
    \partial_\ell^{\, \sigma , \im{}} := \frac{1}{2} \Big( \frac{\partial}{\partial u_\ell} - \im{} \frac{\partial}{\partial v_\ell} \Big) , \qquad 0 \leq \ell \leq m - 1 
    \end{equation}
where $u_{\ell}, v_{\ell}$ are real coordinates associated to a real basis of $B_{\sigma}$, with $(\sigma, \im{})$ an $e$-admissible involution. The next proposition shows that these operators factor a Laplacian, exactly as $\partial_z$ and $\partial_{\overline z}$ factor the planar Laplacian through $\Delta = 4\, \partial_z \partial_{\overline z}$.

    \begin{proposition}\label{P:EllipticFactorization}
    Let $(\sigma , \im{})$ be an $e$-admissible pair, let $U \subseteq \mM \mC(n)$ be open, and let $F : U \ra \mM \mC(n)$ be of class $C^2$. Then, for every $0 \leq \ell \leq m - 1$,
        \begin{equation}\label{Eq:BlockFactorization}
        \partial_\ell^{\, \sigma , \im{}}\, \overline{\partial}_\ell^{\, \sigma , \im{}} = \overline{\partial}_\ell^{\, \sigma , \im{}}\, \partial_\ell^{\, \sigma , \im{}} = \frac{1}{4} \Big( \frac{\partial^2}{\partial u_\ell^2} + \frac{\partial^2}{\partial v_\ell^2} \Big) ,
        \end{equation}
    so that
        \begin{align*}
        \Delta_\sigma := \sum_{\ell = 0}^{m - 1} \Big( \frac{\partial^2}{\partial u_\ell^2} + \frac{\partial^2}{\partial v_\ell^2} \Big) = 4 \sum_{\ell = 0}^{m - 1} \overline{\partial}_\ell^{\, \sigma , \im{}}\, \partial_\ell^{\, \sigma , \im{}} .
        \end{align*}
    Moreover, the real basis $\beta_0 = 1 , \beta_1 , \ldots , \beta_{m - 1}$ of $\cB_\sigma$ may be chosen so that
        \begin{equation}\label{Eq:CanonicalLaplacian}
        \Delta_\sigma = \Delta := \sum_{\vec b \in \mathbb F_2^n} \frac{\partial^2}{\partial x_{\vec b}^2} ,
        \end{equation}
    where $\Delta$ is the standard Laplacian in the canonical coordinates \eqref{Eq:CanoFormMC}; for this choice, $\Delta_\sigma$ is independent of $\sigma$ and of the basis. 
    \end{proposition}

    \begin{proof}
    Since $\im{}$ is a fixed element of $\mM \mC(n)$, it commutes with the constant-coefficient derivations $\partial / \partial u_\ell$ and $\partial / \partial v_\ell$; and since $F$ is of class $C^2$, by Schwarz's theorem, the mixed second partials of each of its real components agree. Expanding the product and using $\ims{} = -1$,
    \[
    \partial_\ell^{\, \sigma , \im{}}\, \overline{\partial}_\ell^{\, \sigma , \im{}} = \frac{1}{4} \Big( \frac{\partial}{\partial u_\ell} - \im{} \frac{\partial}{\partial v_\ell} \Big) \Big( \frac{\partial}{\partial u_\ell} + \im{} \frac{\partial}{\partial v_\ell} \Big) = \frac{1}{4} \Big( \frac{\partial^2}{\partial u_\ell^2} - \ims{} \frac{\partial^2}{\partial v_\ell^2} \Big) = \frac{1}{4} \Big( \frac{\partial^2}{\partial u_\ell^2} + \frac{\partial^2}{\partial v_\ell^2} \Big) .
    \]
    The same computation with the factors reversed gives the identical result. This proves \eqref{Eq:BlockFactorization}, and summation over $\ell$ yields $\tfrac{1}{4} \Delta_\sigma$.

    Endow $\mM \mC(n)$ with the inner product $\langle \cdot , \cdot \rangle$ for which the monomials $\{ \mathbf i^{\vec b} \}_{\vec b \in \mathbb F_2^n}$ form an orthonormal basis. Because $\sigma$ is $\mI(n)$-preserving, it maps each monomial to a signed monomial and permutes the basis up to sign. Hence $\sigma$ is orthogonal, and by Lemma~\ref{L:FixedDecomposition} the subspaces $\cB_\sigma$ and $\im{}\, \cB_\sigma$ are orthogonal. Likewise, since $\im{} \in \mI(n)$, the multiplication $\cL_{\im{}}$ permutes the monomials up to sign by \eqref{Eq:MonomialMult}, so $\cL_{\im{}}$ is an isometry with respect to $\langle \cdot , \cdot \rangle$. 

    Choose an orthonormal basis $\beta_0 = 1 , \beta_1 , \ldots , \beta_{m - 1}$ of $\cB_\sigma$, which is possible with $\beta_0 = 1$ since $1$ is a unit vector of $\cB_\sigma$. Then $\langle \im{} \beta_k , \im{} \beta_\ell \rangle = \langle \beta_k , \beta_\ell \rangle = \delta_{k \ell}$, where $\delta_{kl}$ is the Kronecker symbol, and $\langle \beta_k , \im{} \beta_\ell \rangle = 0$, so the family $\{ \beta_0 , \ldots , \beta_{m - 1} , \im{} \beta_0 , \ldots , \im{} \beta_{m - 1} \}$ is an orthonormal basis of $\mM \mC(n)$. Let $A$ be the matrix of the resulting linear change of coordinates $x = A z$, where $z = ( u_0 , \ldots , u_{m-1} , v_0 , \ldots , v_{m-1} )^\top$; its columns are the coordinates of the $\beta_\ell$ and $\im{} \beta_\ell$ in the monomial basis, so $A^\top A = A A^\top = I$. By the chain rule and $A A^\top = I$,     
    \begin{align*}
    \Delta_\sigma F = \sum_{j = 1}^{2^n} \frac{\partial^2 F}{\partial z_j^2}
    = \sum_{p , q = 1}^{2^n} \Big( \sum_{j = 1}^{2^n} a_{pj}\, a_{qj} \Big) \frac{\partial^2 F}{\partial x_p\, \partial x_q}
    = \sum_{p , q = 1}^{2^n} (A A^\top)_{pq}\, \frac{\partial^2 F}{\partial x_p\, \partial x_q}
    = \sum_{p = 1}^{2^n} \frac{\partial^2 F}{\partial x_p^2} = \Delta F
    \end{align*}
    at every $\eta \in U$, which is \eqref{Eq:CanonicalLaplacian}. 
    \end{proof}

    \begin{corollary}\label{C:HolHarmonic}
    Let $U \subseteq \mM \mC(n)$ be open and let $\im{} \in \mI(n)$ with $\ims{} = -1$. If $F \in \hol_{\im{}} (U)$ or $F \in \overline{\hol}_{\im{}} (U)$, then $\Delta F = 0$, where $\Delta$ is the canonical Laplacian of \eqref{Eq:CanonicalLaplacian}. Equivalently, each of the $2^n$ real components of $F$ is harmonic on $U$.
    \end{corollary}

    \begin{proof}
    By Theorem~\ref{T:SigmaIndependence} and Proposition~\ref{P:AntiHolo}, functions in $\hol_{\im{}} (U)$ and in $\overline{\hol}_{\im{}} (U)$ are real-analytic, hence of class $C^2$. Fix an involution $\sigma$ making $(\sigma , \im{})$ an $e$-admissible pair, and choose the orthonormal basis of Proposition~\ref{P:EllipticFactorization}, so that $\Delta_\sigma = \Delta$. If $F \in \hol_{\im{}} (U)$, then $\overline{\partial}_\ell^{\, \sigma , \im{}} F = 0$ for $0 \leq \ell \leq m - 1$ by \eqref{Eq:SigmaCRCoordinates}, whence, by \eqref{Eq:BlockFactorization},
    \[
    \Delta F = 4 \sum_{\ell = 0}^{m - 1} \partial_\ell^{\, \sigma , \im{}} \big( \overline{\partial}_\ell^{\, \sigma , \im{}} F \big) = 0 .
    \]
    If $F \in \overline{\hol}_{\im{}} (U)$, then $\partial_\ell^{\, \sigma , \im{}} F = 0$ for every $\ell$, and the same identity with the factors reversed gives $\Delta F = 0$. Since $\Delta$ has real coefficients, it acts componentwise on the canonical representation \eqref{Eq:CanoFormMC}, so each real component of $F$ is harmonic.
    \end{proof}

    \begin{remark}
    Proposition~\ref{P:EllipticFactorization} justifies the terminology of Definition~\ref{D:EAdmissible}. The type of the second-order operator produced by the factorization is governed by the sign of $- \ims{}$: for an $e$-admissible pair, $\ims{} = -1$ gives the elliptic blocks $\partial^2 / \partial u_\ell^2 + \partial^2 / \partial v_\ell^2$ and the Laplacian \eqref{Eq:CanonicalLaplacian}. If instead $(\sigma , \jm{})$ is $h$-admissible, with $n \geq 2$, in the sense of the Remarks following Proposition~\ref{P:HoloInclusion}, with $\jm{}^2 = 1$, the same computation gives $- \jm{}^2 = -1$ and the blocks $\partial^2 / \partial u_\ell^2 - \partial^2 / \partial v_\ell^2$, whose sum
    \[
    \sum_{\ell = 0}^{m - 1} \Big( \frac{\partial^2}{\partial u_\ell^2} - \frac{\partial^2}{\partial v_\ell^2} \Big)
    \]
    has signature $(m , m)$. This is an ultrahyperbolic operator when $m \geq 2$; see, for instance, \cite{ CraigWeinstein2008, John1938}.
    \end{remark}

\section{Concluding remarks}\label{S:Conclusion}

We have studied the $\mI(n)$-preserving involutions of the multicomplex algebra
$\mM \mC(n)$ from two complementary angles. On the algebraic side, translating the
problem into $(0,1)$-matrix theory over $\mathbb F_2$ produced the closed-form count
\eqref{Eq:NumberOfInPreservingInvo}, together with the Sign Lemma
(Lemma~\ref{L:SignVectors}). The counting reveals substantially more involutions than
the $2^n$ conjugations traditionally used in multicomplex function theory, and hence
a correspondingly richer supply of Cauchy--Riemann-type systems. On the analytic side,
each elliptic-admissible pair $(\sigma,\im{})$ determines a complex structure
$\cL_{\im{}}$ together with a real form $\cB_\sigma$, and thereby a first-order system.
Theorem~\ref{T:SigmaIndependence} shows that the resulting solution class is governed
by the distinguished unit $\im{}$ alone: the involution $\sigma$ fixes the real
coordinates in which the system is written, but not the function theory it defines.

\bibliographystyle{plain}
\bibliography{Biblio}

\end{document}